\documentclass[12pt]{amsart}
\usepackage{amsfonts, amstext}
\usepackage[usenames,dvipsnames]{xcolor}	
\usepackage{amsmath, amsthm, amssymb, amsfonts}						
\usepackage{fullpage}
\usepackage{enumitem}
\usepackage{mathrsfs}
\usepackage{tikz-cd}
\usepackage{hhline}
\usepackage{textcomp}
\usepackage{mathbbol}

\usepackage{hyperref}



\newcommand{\mcal}{\mathcal}
\newcommand{\mscr}{\mathscr}
\newcommand*{\sHom}{\mscr{H}\kern -.5pt om}
\newcommand*{\sExt}{\mscr{E}\kern -.5pt xt}

\newcommand{\PP}{\mathbb{P}}

\newcommand{\C}{\mathbb{C}}

\renewcommand{\O}{\mcal{O}}

\DeclareMathOperator{\Ext}{Ext}

\DeclareMathOperator{\coker}{coker\,}

\DeclareMathOperator{\codim}{codim\,}

\DeclareMathOperator{\Sing}{Sing}

\DeclareMathOperator{\Sym}{Sym}

\DeclareMathOperator{\rk}{rk}
\DeclareMathOperator{\adj}{adj}
\DeclareMathOperator{\sing}{sing}
\DeclareMathOperator{\PGL}{PGL}
\DeclareMathOperator{\Bl}{Bl}
\DeclareMathOperator{\Hilb}{Hilb}

\newtheorem{theorem}{Theorem}[section]

\newtheorem{proposition}[theorem]{Proposition}

\newtheorem{lemma}[theorem]{Lemma}

\theoremstyle{definition}
\newtheorem{example}[theorem]{Example}
\newtheorem{definition}[theorem]{Definition}

\theoremstyle{remark}
\newtheorem{remark}[theorem]{Remark}

\title{The Harder-Narasimhan Filtration of the Normal Bundle of a Trigonal Canonical Curve}
\author{Henry Fontana}
\address{Department of Mathematics, Statistics and Computer Science, University of Illinois at Chicago, Chicago, IL, 60607}
\email{hfonta2@uic.edu}

\begin{document}

\begin{abstract}
A trigonal canonical curve $C$ lies on a rational normal surface scroll $S$ in $\PP^{g-1}$. In this note we use this fact to compute the Harder-Narasimhan Filtration of the normal bundle of a general such $C$ in $\PP^{g-1}$. We also compute the Harder-Narasimhan filtration of the Normal bundle of a general canonical curve of genus $6$.
\end{abstract}

\maketitle

\section{Introduction}
Let $C$ be a smooth, irreducible, non-hyperelliptic curve over an algebraically closed field $k$. There is a canonical embedding $\phi_K:C \hookrightarrow \PP^{g-1}$ which reflects the intrinsic properties of $C$. The normal bundle $N_{C/\PP^{g-1}}$ controls the deformations of $C$ in this embedding, and it is therefore useful to understand the structure of $N_{C/\PP^{g-1}}$. It was conjectured by Aprodu, Farkas, and Ortega in \cite{AFO16} that $N_{C/\PP^{g-1}}$ is semi-stable for the general canonical curve $C$ once the genus $g$ is large enough. This conjecture was confirmed by Coşkun, Larson, and Vogt in \cite{CLV23} where they proved that if $g \notin \{4,6\}$ then $N_{C/\PP^{g-1}}$ is semi-stable for a general canonical curve of genus $g$.

The result of \cite{CLV23} raises the question of which special curves in the non-hyperelliptic locus of $\mathcal{M}_g$ have canonical models such that $N_{C/\PP^{g-1}}$ is unstable. Furthermore in the case of instability we can ask for the Harder-Narasimhan filtration of $N_{C/\PP^{g-1}}$. For example we will show that $N_{C/\PP^{g-1}}$ is unstable when $C$ is trigonal, due to the fact that $C$ lies on a surface scroll $S \subset \PP^{g-1}$. The main result of this note is the following Theorem computing the HN-filtration of $N_{C/\PP^{g-1}}$.

\begin{theorem} \label{MT}
Let $C$ be a general trigonal canonical curve of genus $g$ embedded in $\PP^{g-1}$ and let $S$ be the rational normal scroll containing $C$. Then
$$N_{C/S} \subset N_{C/\PP^{g-1}}$$
is the Harder-Narasimhan filtration of $N_{C/\PP^{g-1}}$.
\end{theorem}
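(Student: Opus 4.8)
The plan is to run the exact sequence of normal bundles
\[
0 \longrightarrow N_{C/S} \longrightarrow N_{C/\PP^{g-1}} \longrightarrow N_{S/\PP^{g-1}}|_C \longrightarrow 0
\]
(for a general trigonal curve $S$ is smooth, so $N_{S/\PP^{g-1}}|_C$ is locally free and $N_{C/S}$ is a subbundle) and to verify the three conditions characterising a two-step Harder--Narasimhan filtration $0\subset E_1\subset E_2$: that $E_1=N_{C/S}$ is semistable, that $E_2/E_1 = N_{S/\PP^{g-1}}|_C$ is semistable, and that $\mu(E_1)>\mu(E_2/E_1)$. The first is automatic because $N_{C/S}=\OO_S(C)|_C$ has rank one. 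For the slopes I would recall that $\Pic S=\Z H\oplus\Z F$ with $H^2=g-2$, $H\cdot F=1$, $F^2=0$, $K_S=-2H+(g-4)F$ and $C\in|3H-(g-4)F|$ (forced by $C\cdot F=3$, $C\cdot H=2g-2$ and adjunction $K_C=H|_C$); then $\deg N_{C/S}=C^2=3g+6$, while the Euler sequence gives $\rk N_{C/\PP^{g-1}}=g-2$ and $\deg N_{C/\PP^{g-1}}=(g+1)(2g-2)$, so $N_{S/\PP^{g-1}}|_C$ has rank $g-3$ and degree $2g^2-3g-8$. Hence $\mu(N_{C/S})=3g+6>\frac{2(g^2-1)}{g-2}=\mu(N_{C/\PP^{g-1}})$ for all $g\ge 4$, and since the slope of a bundle is a mediant of the slopes of a proper subbundle and the corresponding quotient, this also gives $\mu(N_{C/S})>\mu(N_{S/\PP^{g-1}}|_C)$. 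So the theorem comes down to one claim: $N_{S/\PP^{g-1}}|_C$ is semistable.

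\textbf{Identifying the quotient.} To analyse $N_{S/\PP^{g-1}}|_C$ I would first describe $N_{S/\PP^{g-1}}$ on $S=\PP(\mathcal{E})$, where $\mathcal{E}=\OO_{\PP^1}(a)\oplus\OO_{\PP^1}(b)$ with $a+b=g-2$ and $a,b\ge 1$ for the general curve. Assembling the Euler sequence of $S\hookrightarrow\PP^{g-1}=\PP(H^0(\mathcal{E}))$, the tautological sequence on $\PP(\mathcal{E})$, and the relative (co)tangent sequence of $\pi:S\to\PP^1$ --- and using $\ker\bigl(H^0(\OO_{\PP^1}(a))\otimes\OO_{\PP^1}\to\OO_{\PP^1}(a)\bigr)=\OO_{\PP^1}(-1)^{\oplus a}$ --- one gets a presentation
\[
0 \longrightarrow \OO_S(2F) \longrightarrow \OO_S(H+F)^{\oplus(g-2)} \longrightarrow N_{S/\PP^{g-1}} \longrightarrow 0 .
\]
Restricting to $C$, writing $H_f:=F|_C$ for a fibre of the trigonal map $f:C\to\PP^1$ and $\OO_C(H)=\omega_C$, and checking (a short cohomology computation) that the first two terms restrict to the complete evaluation sequence of $A:=\omega_C(-H_f)$, one obtains after dualising and twisting
\[
N_{S/\PP^{g-1}}|_C \;\cong\; M_A^\vee \otimes \omega_C(H_f), \qquad M_A := \ker\bigl(H^0(A)\otimes\OO_C \to A\bigr) ;
\]
equivalently $M_A=\phi_A^*\,\Omega^1_{\PP^{g-3}}(1)$, where $\phi_A:C\to\PP^{g-3}$ is the morphism given by $|A|$ (the projection of the canonical curve from the ruling line spanned by a fibre of $f$). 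As dualising and tensoring with a line bundle preserve (semi)stability, it remains to prove that the syzygy bundle $M_A$ is semistable.

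\textbf{The main obstacle.} This last point is the heart of the matter, and where I expect the real work to be. Here $A=\omega_C(-H_f)$ is globally generated of degree $2g-5$ with $h^0(A)=g-2$, so it lies outside the range $\deg\ge 2g+1$ of Butler's theorem, and it is a special line bundle on a special (trigonal) curve, so no general-position result applies directly. The plan is a Butler-type estimate carried out by hand: for a saturated $\mathcal{F}\subset M_A\subseteq H^0(A)\otimes\OO_C$ of rank $r$, take the minimal $W'\subseteq H^0(A)$ with $\mathcal{F}\subseteq W'\otimes\OO_C$ and let $D\ge 0$ be the base divisor of $W'$, so that $\mathcal{F}$ lands in the smaller syzygy sheaf $\ker\bigl(W'\otimes\OO_C\to A(-D)\bigr)$; an induction on the rank, combined with $\dim W'\le h^0(\omega_C(-H_f-D))$, reduces the inequality $\mu(\mathcal{F})\le\mu(M_A)$ to numerical estimates on $h^0(\OO_C(H_f+D))$ --- equivalently on the ranks of the multiplication maps $H^0(\omega_C(-H_f-D))\otimes H^0(\OO_C(D))\to H^0(\omega_C(-H_f))$ --- which on a trigonal curve are tightly constrained by the $g^1_3$, and for which genericity of $C$ supplies maximal rank. (For $g=5$ this is immediate: $M_A$ then has rank two, and a destabilising sub-line-bundle would have to come from a linear syzygy among the $g-2$ sections of $A$ with coefficients in $H^0(\OO_C(D))$ for an effective $D$ with $\deg D\le 2$; since $C$ is non-hyperelliptic such $D$ has $h^0=1$, so the syzygy would be a constant relation among linearly independent sections --- impossible.) If the higher-rank syzygy bookkeeping becomes unwieldy, the fallback is to use openness of semistability in families and degenerate $C$, together with its scroll, to a suitable nodal trigonal curve on which $N_{S/\PP^{g-1}}|_C$ splits into pieces one can treat directly, in the spirit of \cite{CLV23}.
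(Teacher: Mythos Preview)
Your reduction is the same as the paper's: once one checks that $N_{C/S}$ is a line bundle of degree $3g+6>\mu(N_{C/\PP^{g-1}})$, the whole theorem is equivalent to the semistability of $N_{S/\PP^{g-1}}|_C$. Your reformulation of this quotient as a twisted dual syzygy bundle $M_A^\vee\otimes\omega_C(H_f)$ with $A=\omega_C(-H_f)$ is an attractive alternative viewpoint (and the rank and degree do check out), but the proposal stops exactly where the content begins. You sketch a Butler-type plan --- pass to the minimal generating subspace $W'\subset H^0(A)$, strip off base loci, and bound $h^0(\OO_C(H_f+D))$ --- but you do not carry it out beyond $g=5$, and you yourself flag that the bookkeeping may become ``unwieldy'' and that the fallback is degeneration. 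That fallback \emph{is} the paper's proof, and it is not a triviality: one degenerates $C$ on $S$ to a union $C_1\cup C_2\cup C_3$ of three rational sections, uses Lemmas~\ref{L6}--\ref{L7} to compute each $N_{S/\PP^{g-1}}|_{C_i}$ as $N_{C_i/\Lambda_i}\oplus\O_{\PP^1}(\lambda_i+1)^{\oplus(g-\lambda_i-2)}$, and then runs a several-page case analysis in the adjusted-slope formalism of \cite{CLV22}, using pointing bundles (Lemma~\ref{L5}) to control how the fibres of a putative destabilising $\mathcal{F}$ glue across the nodes. None of this machinery appears in your proposal.

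There is also a reason to be cautious about the Butler route itself. The linear system $|A|$ is the projection of the canonical model from a ruling line of the scroll, so its image again lies on a surface scroll in $\PP^{g-3}$; in particular $A$ is far from ``general'' on $C$, and the usual maximal-rank or general-position inputs that make Butler-type induction go through are not available for free. The numerics are also tight: $\mu(M_A)=-(2g-5)/(g-3)$, and the paper notes that $\gcd(\deg,\rk)=1$ here, so semistability is equivalent to stability and one really must rule out subbundles of slope exactly $\lceil\mu\rceil$. If you want to pursue the syzygy approach, you would need to replace the hand-wave about ``multiplication maps being tightly constrained by the $g^1_3$'' with an actual argument, and it is not clear this is shorter than the degeneration the paper performs. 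As written, the proposal identifies the correct target but does not hit it.
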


We already know from \cite{CLV23} that canonical curves with $N_{C/\PP^{g-1}}$ unstable are rare. It is expected that $N_{C/\PP^{g-1}}$ will only be unstable if it is forced to be by the geometry of $C$. A future goal would be to describe all the geometric conditions which lead to instability of $N_{C/\PP^{g-1}}$. For example, it remains to compute the HN-filtration when $C$ is a genus $g$ curve of gonality $4$.

\begin{itemize}
\item In section $2$ we will recall some preliminary results such as the normal bundle of a rational normal curve and the definition of semi-stability on connected nodal curves.
\item Section $3$ is devoted to showing that $N_{C/\PP^{g-1}}$ has a destabilizing subbundle in the trigonal case.
\item In section $4$ we prove Theorem \ref{MT} by degenerating $C$ to a union of rational curves. 
\item In the final section we discuss semi-stability of the normal bundle of tetragonal canonical curves. The main result of this section is the computation of the HN-filtration of $N_{C/\PP^{5}}$ where $C$ is a canonical curve of genus $6$ (recall that all genus $6$ curves are tetragonal).
\end{itemize}

A future problem is to determine the HN-filtration of $N_{C/\PP^{g-1}}$ for tetragonal canonical curves of genus $g \geq 7$. The expectation is that the HN-filtration should be $N_{C/Q} \subset N_{C/\PP^{g-1}}$ where $Q \subset \PP^{g-1}$ is the threefold scroll containing $C$. Also our work in the final section reveals a potential strategy for determining the stability of the normal bundle of a threefold scroll $N_{Q/\PP^{g-1}}$. In the case $g = 6$ case we are able to find a rational curve $C \subset Q$ such that $N_{Q/\PP^5} \rvert_C$ is semi-stable which implies $N_{Q/\PP^{5}}$ must be semi-stable.

\subsection{Acknowledgments}
Thank you to my advisor İzzet Coşkun for introducing me to the problem and guiding me as I worked on the solution. I would also like to thank Eric Larson, Isabel Vogt, and Sebastian Casalaina-Martin for valuable input and discussions.

\section{Preliminaries}\label{sec:Prelims}

We will follow the conventions and definitions established in \cite{ACGH}. Given any smooth algebraic curve $C$ over an algebraically closed field $k$ there is a finite morphism $\phi:C \to \PP^1$. The minimal degree of a morphism $C \to \PP^1$ is the \textbf{gonality} of $C$. Curves of genus $g \geq 2$ and gonality $2$ are hyperelliptic and curves with gonality $3$ are \textbf{trigonal}. A smooth curve of genus $g$ is non-hyperelliptic iff the canonical linear series gives an embedding $C \to \PP^{g-1}$ and the image of such an embedding is a \textbf{canonical model} of $C$. We will follow the definition of \cite{FS91} and refer to a curve $C \subset \PP^{g-1}$ as a \textbf{canonical curve} if $\O_{\PP^1}(1) \cong \omega_C$, $h^0(\O_C)=1$, and $h^0(\omega_C)=g$. Canonical curves form an irreducible component in the Hilbert scheme of genus $g$, degree $2g-2$ curves in $\PP^{g-1}$ and by \textbf{general canonical curve} we mean an element lying in some Zariski open subset of this component.

We will use the term bundle to refer to an algebraic vector bundle over $k$. We can associate to any bundle $\mathcal{E}$ its rank and degree. The \textbf{slope} of $\mathcal{E}$ is defined to be
$$\mu(\mathcal{E})=\frac{\deg(\mathcal{E})}{\rk(\mathcal{E})}$$
We say that a bundle $\mathcal{E}$ is \textbf{slope semi-stable} if $\mu(\mathcal{F}) \leq \mu(\mathcal{E})$ for all proper subbundles $\mathcal{F} \subset \mathcal{E}$. If the inequality is strict for all proper subbundles then $\mathcal{E}$ is \textbf{slope stable}. The following theorem, which can be found in section $5.4$ of \cite{LePotier}, shows that every vector bundle can be built up from semi-stable bundles by taking successive extensions. The filtration in the Theorem is called the \textbf{Harder-Narasimhan} filtration of $\mathcal{E}$.

\begin{theorem} \label{T1}
Let $\mathcal{E}$ be a vector bundle on a complex projective variety $X$. There is a filtration
$$0 =\mathcal{E}_0 \subset \mathcal{E}_1 \subset \mathcal{E}_2 \subset \cdots \subset \mathcal{E}_k=\mathcal{E}$$
such that if $\mathcal{F}_i=\mathcal{E}_i / \mathcal{E}_{i-1}$ for $1 \leq i \leq k$ then
\begin{enumerate}
    \item Each $\mathcal{F}_i$ is semi-stable 
    \item $\mu(\mathcal{F}_{i-1}) > \mu(\mathcal{F}_i)$ for $i=1, \cdots, k$
\end{enumerate}
\end{theorem}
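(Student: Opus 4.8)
The plan is to build the filtration recursively by repeatedly peeling off the \emph{maximal destabilizing subsheaf}. Set $r=\rk\mathcal{E}$. The whole argument rests on two ingredients: (a) a boundedness statement — for each $s$, the degrees of the rank-$s$ subsheaves of $\mathcal{E}$ are bounded above — and (b) the additivity of rank and degree in short exact sequences, which reduces everything else to slope bookkeeping. In the applications of this note $X$ is a curve (possibly nodal), where every torsion-free sheaf is locally free and every subsheaf has a locally free saturation of the same rank; I will run the argument in that setting, the general case being \cite{LePotier}.

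First I would prove (a). For a line subbundle $L\subseteq\mathcal{E}$, one bounds $\deg L$ by induction on $r$: fix a line subbundle $L_0\subseteq\mathcal{E}$ and put $\mathcal{Q}=\mathcal{E}/L_0$; an arbitrary $L$ either factors through $L_0$, so $\deg L\le\deg L_0$, or injects into $\mathcal{Q}$, so $\deg L$ is at most the maximal degree of a line subbundle of $\mathcal{Q}$, bounded by induction. For a rank-$s$ subsheaf $\mathcal{F}\subseteq\mathcal{E}$, taking $s$-th exterior powers gives an injection $\det\mathcal{F}\hookrightarrow\wedge^s\mathcal{E}$, so $\deg\mathcal{F}$ is bounded above by the line-subbundle bound for $\wedge^s\mathcal{E}$. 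It follows that only finitely many slopes $\mu(\mathcal{F})$ exceed any fixed value, so $\mu_{\max}(\mathcal{E}):=\sup_{0\ne\mathcal{F}\subseteq\mathcal{E}}\mu(\mathcal{F})$ is finite and attained.

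Next I would isolate $\mathcal{E}_1$. Among the subsheaves $\mathcal{F}$ with $\mu(\mathcal{F})=\mu_{\max}(\mathcal{E})$, choose one of maximal rank and pass to its saturation (this preserves the rank and cannot lower the slope, hence preserves it); call the result $\mathcal{E}_1$. It is unique: given two such, $\mathcal{F}_1,\mathcal{F}_2$, the sequence $0\to\mathcal{F}_1\cap\mathcal{F}_2\to\mathcal{F}_1\oplus\mathcal{F}_2\to\mathcal{F}_1+\mathcal{F}_2\to0$ together with $\mu(\mathcal{F}_1\cap\mathcal{F}_2)\le\mu_{\max}$ forces $\mu(\mathcal{F}_1+\mathcal{F}_2)\ge\mu_{\max}$, and maximality of rank then forces $\mathcal{F}_1+\mathcal{F}_2$ to have the rank of each $\mathcal{F}_i$, so (both being saturated) $\mathcal{F}_1=\mathcal{F}_2$. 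Moreover $\mathcal{E}_1$ is semistable: any proper $\mathcal{G}\subset\mathcal{E}_1$ is a subsheaf of $\mathcal{E}$, so $\mu(\mathcal{G})\le\mu_{\max}=\mu(\mathcal{E}_1)$. Since $\mathcal{E}_1$ is saturated, $\mathcal{E}/\mathcal{E}_1$ is again locally free, of rank $<r$. Now I would iterate, taking $\mathcal{E}_2$ to be the preimage in $\mathcal{E}$ of the maximal destabilizing subsheaf of $\mathcal{E}/\mathcal{E}_1$, and so on; the process terminates by induction on rank, and each graded piece $\mathcal{F}_i=\mathcal{E}_i/\mathcal{E}_{i-1}$ is semistable by construction, which is (1).

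Finally, for (2) — that $\mu(\mathcal{F}_{i-1})>\mu(\mathcal{F}_i)$ — I would argue by contradiction: if $\mu(\mathcal{F}_i)\ge\mu(\mathcal{F}_{i-1})$, pull the subsheaf $\mathcal{F}_i\subseteq\mathcal{E}/\mathcal{E}_{i-1}$ back to a subsheaf $\mathcal{A}\subseteq\mathcal{E}/\mathcal{E}_{i-2}$ containing $\mathcal{F}_{i-1}$; additivity of degree gives $\mu(\mathcal{A})\ge\mu(\mathcal{F}_{i-1})=\mu_{\max}(\mathcal{E}/\mathcal{E}_{i-2})$ while $\rk\mathcal{A}>\rk\mathcal{F}_{i-1}$, contradicting either the maximality of that slope or, in the case of equality, the maximality of rank used to define $\mathcal{F}_{i-1}$. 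The one genuinely substantive step is the boundedness input (a); the rest is linear bookkeeping with ranks and degrees. (Uniqueness of the whole filtration, not asserted here, would follow by the same bookkeeping together with the vanishing $\Hom(\mathcal{A},\mathcal{B})=0$ for semistable $\mathcal{A},\mathcal{B}$ with $\mu(\mathcal{A})>\mu(\mathcal{B})$.)
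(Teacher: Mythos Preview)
The paper does not prove this theorem at all: it is quoted as a standard result with the reference ``section~5.4 of \cite{LePotier}'' and no argument is given. Your sketch is the standard proof of existence of the Harder--Narasimhan filtration (boundedness of subsheaf degrees via exterior powers, extraction of the maximal destabilizing subsheaf, induction on rank, and the slope comparison to get strict decrease), and it is correct in the curve setting you work in; this is essentially what one finds in \cite{LePotier}, so there is nothing to contrast.
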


In order to detect semi-stability we need to know the slope of $N_{C/\PP^{g-1}}$. We will now compute the degree, and therefore the slope, of the normal bundle $C$ of a nonsingular curve of degree $d$ embedded in $\PP^n$ for some $n \geq 1$. In what follows we will write $N_C$ in place of $N_{C/\PP^n}$. We compute
$$\deg(N_C)=\deg(T_{\PP^n} \rvert_C) - \deg(T_C)$$
$$\rk(N_C)=\rk(T_{\PP^n} \rvert_C) - \rk(T_C)$$
The degree of $T_{\PP^n} \rvert_C$ is determined by the degree of $C$ and the Euler sequence
\[\begin{tikzcd}
	0 & {\O_{\PP^n}} & {\O_{\PP^n}(1)^{\oplus n+1}} & {T_{\PP^n}} & 0
	\arrow[from=1-1, to=1-2]
	\arrow[from=1-4, to=1-5]
	\arrow[from=1-2, to=1-3]
	\arrow[from=1-3, to=1-4]
\end{tikzcd}\]
which together imply $\deg(T_{\PP^n} \rvert_C)=d(n+1)$. Therefore we have $\deg(N_{C})=d(n+1)+2g-2$ so that
$$\mu(N_C)=\frac{d(n+1)+2g-2}{n-1}$$
In particular we deduce the slope of $N_{C/\PP^{g-1}}$ where $C \subset \PP^{g-1}$ is a smooth canonical curve of genus $g$.

\begin{proposition} \label{P1}
If $C \subset \PP^{g-1}$ is a smooth canonical curve of genus $g$ then
$$\mu(N_{C/\PP^{g-1}})=2g+4+\frac{6}{g-2}$$
\end{proposition}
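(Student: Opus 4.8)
The plan is to read off the two numbers $d$ and $n$ in the canonical case and substitute them into the slope formula $\mu(N_C) = \dfrac{d(n+1) + 2g - 2}{n-1}$ established in the discussion above. First I would recall that the canonical embedding realizes $C$ inside $\PP^{g-1}$, so that $n = g - 1$, and that its degree is $d = \deg \omega_C = 2g - 2$; this last fact is just the statement that the hyperplane class pulls back to $\omega_C$ together with the Riemann--Roch identity $\deg \omega_C = 2g-2$, and it is consistent with the definition of a canonical curve recalled in Section~\ref{sec:Prelims}.

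Next I would substitute $d = 2g-2$ and $n = g-1$ into the formula: the numerator becomes $(2g-2)g + (2g-2) = 2g^2 - 2$ and the denominator becomes $g - 2$, giving $\mu(N_{C/\PP^{g-1}}) = \dfrac{2g^2-2}{g-2}$. A one-line polynomial division, writing $2g^2 - 2 = (2g+4)(g-2) + 6$, then puts this in the advertised form $2g + 4 + \dfrac{6}{g-2}$.

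There is essentially no obstacle here: the proposition is an immediate corollary of the general slope computation preceding it, and the degree and Euler sequence arguments given there apply verbatim with these values of $d$ and $n$. The only points worth a remark are that one needs $g \geq 3$ for the canonical model to be an embedding into a projective space of dimension at least $2$, so that $\rk N_{C/\PP^{g-1}} = g - 2 > 0$ and the slope is genuinely defined.
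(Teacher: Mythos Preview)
Your proposal is correct and matches the paper's approach exactly: the proposition is stated immediately after the general slope formula $\mu(N_C)=\dfrac{d(n+1)+2g-2}{n-1}$ with the comment that the canonical case follows by specialization, and you have simply written out the substitution $d=2g-2$, $n=g-1$ and the polynomial division that the paper leaves implicit.
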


Recall that given fixed integers $a,r$ the only semi-stable bundle on $\PP^1$ of slope $a$ and rank $r$ has the form
$${\O_{\PP^1}\left(a\right)}^{\oplus r}$$ 
Since our strategy of proof is to degenerate to a union of rational curves we will constantly be using the well known fact, see for example \cite{CR19}, that the normal bundle of a rational normal curve $C$ of degree $d$ in $\PP^d$ is a semi-stable bundle on $\PP^1$ of slope $d+2$ and rank $d-1$.

\begin{lemma} \label{L1}
If $C \subset \PP^d$ is a rational normal curve of degree $d$ then
$$N_{C/\PP^d} \cong \O_{\PP^1}(d+2)^{\oplus d-1}$$
\end{lemma}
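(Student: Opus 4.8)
The plan is to prove Lemma \ref{L1} by explicitly decomposing $N_{C/\PP^d}$ as a direct sum of line bundles and showing every summand has degree $d+2$. First I would parametrize the rational normal curve, say via $\nu_d \colon \PP^1 \to \PP^d$, $[s:t] \mapsto [s^d : s^{d-1}t : \cdots : t^d]$, so that $C = \nu_d(\PP^1)$ and $\O_{\PP^1}(1) \cong \nu_d^*\O_{\PP^d}(1)$. Pulling back the Euler sequence along $\nu_d$ gives
\begin{equation*}
0 \to \O_{\PP^1} \to \O_{\PP^1}(d)^{\oplus d+1} \to T_{\PP^d}\rvert_C \to 0,
\end{equation*}
and combining this with the normal bundle sequence $0 \to T_C \to T_{\PP^d}\rvert_C \to N_{C/\PP^d} \to 0$, together with $T_C \cong \O_{\PP^1}(2)$, I can read off that $\deg N_{C/\PP^d} = d(d+1) - 2 = (d-1)(d+2)$ and $\rk N_{C/\PP^d} = d-1$, so the slope is $d+2$ and the claim amounts to showing $N_{C/\PP^d}$ has no summand of degree different from $d+2$.

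Since every vector bundle on $\PP^1$ splits as a direct sum of line bundles (Grothendieck), it suffices to show $H^1(N_{C/\PP^d}(-d-1)) = 0$ and, dually or symmetrically, that $N_{C/\PP^d}(-d-3)$ has no global sections; equivalently $N_{C/\PP^d} \otimes \O_{\PP^1}(-1)$ is globally generated and has vanishing $H^1$. The cleanest route is to twist the pulled-back Euler sequence and the normal bundle sequence by $\O_{\PP^1}(-d-1)$ and chase cohomology: from $0 \to \O_{\PP^1}(-d-1) \to \O_{\PP^1}(-1)^{\oplus d+1} \to T_{\PP^d}\rvert_C(-d-1) \to 0$ one gets $H^0(T_{\PP^d}\rvert_C(-d-1)) = 0$ and $H^1(T_{\PP^d}\rvert_C(-d-1)) = 0$ (the first because $h^0(\O_{\PP^1}(-1)) = 0$ and $h^1(\O_{\PP^1}(-d-1)) = d$ needs to be matched against the connecting map — here I should be careful), and then the normal bundle sequence twisted by $\O_{\PP^1}(-d-1)$ forces $H^1(N_{C/\PP^d}(-d-1)) = 0$ since $H^1(T_C(-d-1)) = H^1(\O_{\PP^1}(1-d)) = 0$ for $d \geq 2$. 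This shows no summand has degree $\leq d+1$; combined with the degree count $(d-1)(d+2)$ over $d-1$ summands, all summands must have degree exactly $d+2$.

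Alternatively, and perhaps more transparently, I would use the standard fact that the rational normal curve is projectively normal and cut out by quadrics, and invoke the description of $N_{C/\PP^d}$ as a quotient related to the $d-1$ independent quadrics containing $C$; but the cohomological argument above is self-contained and is the one I would write up. One can also cite \cite{CR19} directly, as the paper already does, since the statement is classical. I expect the main obstacle to be keeping the cohomology bookkeeping honest when the connecting maps in the long exact sequences are not obviously zero — specifically, verifying that $H^1(T_{\PP^d}\rvert_C(-d-1)) = 0$ requires knowing that the map $H^1(\O_{\PP^1}(-d-1)) \to H^1(\O_{\PP^1}(-1)^{\oplus d+1})$ is injective, which follows because $H^0(T_{\PP^d}\rvert_C(-d-1))$ is sandwiched between zero groups, but this needs to be stated carefully. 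Once that vanishing is in hand, the splitting type is pinned down entirely by degree and rank, and the lemma follows.
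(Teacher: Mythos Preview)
The paper does not actually prove this lemma; it states it as a well-known fact and cites \cite{CR19}. So there is no proof in the paper to compare against, and any correct argument you give would be a genuine addition.

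Your cohomological approach, however, has a real gap, and it is exactly where you flagged unease. The vanishing $H^1(N_{C/\PP^d}(-d-1)) = 0$ does hold (it follows from the surjection $H^1(T_{\PP^d}\rvert_C(-d-1)) \twoheadrightarrow H^1(N_{C/\PP^d}(-d-1))$ together with $H^1(T_{\PP^d}\rvert_C(-d-1)) = 0$ from the Euler sequence; your remark about $H^1(\O_{\PP^1}(1-d))$ is both false for $d \geq 3$ and irrelevant to this step). But this vanishing only tells you that every summand has degree at least $d$, since $H^1(\O_{\PP^1}(a_i - d - 1)) = 0$ is equivalent to $a_i \geq d$, not $a_i \geq d+2$ as you claim. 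To force all $a_i = d+2$ you need either $H^1(N_{C/\PP^d}(-d-3)) = 0$ or $H^0(N_{C/\PP^d}(-d-3)) = 0$, and neither drops out of the twisted Euler and normal bundle sequences: one computes $T_{\PP^d}\rvert_C \cong \O_{\PP^1}(d+1)^{\oplus d}$, so at the relevant twist $T_{\PP^d}\rvert_C(-d-3) \cong \O_{\PP^1}(-2)^{\oplus d}$ has $h^0 = 0$ but $h^1 = d$, and the long exact sequences do not close up. (Your side claim $H^0(T_{\PP^d}\rvert_C(-d-1)) = 0$ is also false; that space is $d$-dimensional.)

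Two standard ways to close the gap: either observe that the $\PGL_2$-action on $\PP^1$ extends via $\Sym^d$ to an action on $\PP^d$ preserving $C$, so $N_{C/\PP^d}$ is $\PGL_2$-equivariant and hence perfectly balanced, after which the degree and rank pin down the splitting type; or argue by induction on $d$ using projection from a point $p \in C$, where the pointing-bundle sequence $0 \to N_{C \to p} \to N_{C/\PP^d} \to \pi_p^* N_{\pi_p(C)/\PP^{d-1}}(p) \to 0$ has kernel $\O_{\PP^1}(d+2)$ (cf.\ Example~\ref{E1}) and quotient $\O_{\PP^1}(d+2)^{\oplus d-2}$ by the inductive hypothesis, and splits because $\Ext^1(\O_{\PP^1}(d+2),\O_{\PP^1}(d+2)) = 0$.
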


Next we must briefly recall the construction of the \textbf{pointing bundle} $N_{C \to p}$. For a much more complete description of pointing bundles the reader should consult section $5$ of \cite{ALY19}. Given a smooth curve $C \subset \PP^r$ and a point $p \in \PP^r$ let $\pi_p:C \to \PP^{r-1}$ be the restriction to $C$ of projection from $p$ onto a hyperplane $\PP^{r-1}$. Furthermore let $U \subset C$ be the open set consisting of points $q$ such that the projective tangent space $\mathbb{T}_q(C) \subset \PP^{r}$ does not contain $p$. Then on the open set $U$ the following is an exact sequence of vector bundles

\[\begin{tikzcd}
	0 & {\mathcal{L}} & {N_{C/\PP^{r-1}} \rvert_U} & {\pi_p^* N_{\pi_p(C)/\PP^{r-1}} \rvert_U} & 0
	\arrow[from=1-1, to=1-2]
	\arrow[from=1-2, to=1-3]
	\arrow[from=1-3, to=1-4]
	\arrow[from=1-4, to=1-5]
\end{tikzcd}\]
Geometrically the fibers of the kernel $\mathcal{L}$ are the normal directions in $N_{C/\PP^{r-1}}$ pointing towards $p$, which of course only makes sense for fibers over $q \in U$ at which the direction towards $p$ is not tangent to $C$. Furthermore by the curve-to-projective extension theorem the line bundle $\mathcal{L}$ is the restriction of a subbundle $N_{C \to p} \subset N_{C/\PP^r}$ which we call the pointing bundle towards $p$.

\begin{example} \label{E1}
Let $C \subset \PP^d$ be a rational normal curve of degree $d$ and choose a point $p \in C$. Projection from $p$ defines a birational map from $C$ to a rational normal curve $X$ of degree $d-1$ in $\PP^{d-1} \subset \PP^d$. The cone over $X$ with vertex $p$ is a surface $Q_p \subset \PP^d$ which contains $C$ and is singular exactly at $p$. Suppose $\pi=\iota \circ \pi_p: \hat{Q_p} \to \PP^{d}$ is the projection map of the blowup $\pi_p:\hat{Q_p} \to Q_p$ of $Q_p$ at $p$ followed by inclusion $\iota:Q_p \to \PP^{d}$. Then the differential 
$$d\pi \rvert_C: T_{\hat{Q_p}} \rvert_C \to T_{\PP^d} \rvert_C$$
is an isomorphism along $T_C$ and drops rank precisely at $p \in C$. Let $E \subset \hat{Q_p}$ be the exceptional divisor of the blow up and observe that $E$ and $C$ intersect with multiplicity $1$ at $p$. Therefore, if we write out the map $\pi$ in local coordinates the determinant of $(d-1) \times (d-1)$ minors of the Jacobian matrix vanish to order $1$ at $p$. It follows that we get an inclusion of vector bundles
$$N_{C/\hat{Q_p}} \subset N_{C/\PP^{d}}(-p)$$
so that $N_{C/\hat{Q_p}}(p)$ is a subbundle of $N_{C/\PP^{d}}$. Furthermore from the construction it is clear that we have
$$N_{C/\hat{Q_p}}(p) \rvert_U = \ker(N_{C/\PP^d} \rvert_U \to \pi_p^* N_{\pi_p(C)/\PP^{d-1}} \rvert_U)$$
where $U=C \backslash \{p\}$, hence we conclude that $N_{C/\hat{Q_p}}(p)=N_{C \to p}$.
\end{example}

\begin{lemma} \label{L5}
If $C \subset \PP^d$ is a degree $d$ rational normal curve then given any $d-1$ distinct points $p_1, \dots, p_{d-1} \in C$ the induced map
$$N_{C \to p_1} \oplus N_{C \to p_2} \oplus \dots \oplus N_{C \to p_{d-1}} \to N_{C/\PP^{d}}$$
is an isomorphism.
\end{lemma}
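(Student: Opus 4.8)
The plan is to show that the evaluation map $\Phi\colon\bigoplus_{i=1}^{d-1}N_{C\to p_i}\to N_{C/\PP^d}$ is an isomorphism by matching ranks and degrees and then checking that $\Phi$ restricts to an isomorphism on the fibre over one carefully chosen point of $C$. Source and target both have rank $d-1$. Since $\Aut(C)=\PGL_2$ lifts to $\PP^d$ and acts transitively on $C$, the bundles $N_{C\to p_i}$ are pairwise isomorphic, of some common degree $\delta$. By Example~\ref{E1}, $N_{C\to p_i}=N_{C/\hat{Q_{p_i}}}(p_i)$, so $\delta=\deg N_{C/\hat{Q_{p_i}}}+1=\widetilde C^{\,2}+1$, where $\widetilde C\cong C$ is the strict transform of $C$ in the Hirzebruch surface $\hat{Q_{p_i}}$. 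Writing $E$ for the contracted section ($E^2=-(d-1)$) and $f$ for a fibre, the pullback of the hyperplane class is $E+(d-1)f$; since $\widetilde C$ has degree $d$ in $\PP^d$ and meets $E$ transversally in a single point, $\widetilde C\sim E+df$, whence $\widetilde C^{\,2}=d+1$ and $\delta=d+2$. In particular $\deg\bigl(\bigoplus_i N_{C\to p_i}\bigr)=(d-1)(d+2)=\deg N_{C/\PP^d}$.

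Next I would compute $\Phi$ on fibres away from $p_1,\dots,p_{d-1}$. Fix $q\in C$ with $q\neq p_i$ for all $i$. Because $C$ is a rational normal curve, $\mathbb{T}_{q'}C$ passes through $p_i$ only when $q'=p_i$, so $q$ lies in the open locus on which the sequence defining $N_{C\to p_i}$ is exact; hence $N_{C\to p_i}\rvert_q\subset N_{C/\PP^d}\rvert_q=T_q\PP^d/T_qC$ is the line spanned by the tangent direction at $q$ of the secant $\overline{q p_i}$. Choosing affine coordinates with $q$ at the origin and the hyperplane at infinity avoiding all the $p_i$, this direction is the vector $v_i\in T_q\PP^d\cong k^d$ from $q$ to $p_i$, while $T_qC$ is spanned by the tangent vector $v_0$; so $\Phi$ is fibrewise an isomorphism at $q$ exactly when $v_0,v_1,\dots,v_{d-1}$ form a basis of $k^d$, equivalently when the linear span of the divisor $D:=2q+p_1+\cdots+p_{d-1}$ equals $\PP^d$.

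This spanning property always holds: $D$ has degree $d+1$, and a hyperplane of $\PP^d$ contains $D$ iff the corresponding element of $H^0(\O_C(1))$ lies in $H^0\bigl(\O_C(1)(-D)\bigr)=H^0\bigl(\O_{\PP^1}(-1)\bigr)=0$, using that $C$ is linearly normal; so no hyperplane contains $D$ and $\overline D=\PP^d$. Therefore $\Phi$ restricts to an isomorphism on the fibre over $q$, so $\det\Phi$ is a nonzero section of $\det N_{C/\PP^d}\otimes\bigotimes_i N_{C\to p_i}^{\vee}$, a line bundle of degree $(d-1)(d+2)-(d-1)(d+2)=0$ on $C\cong\PP^1$. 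A nonzero section of a degree-$0$ line bundle on $\PP^1$ vanishes nowhere, so $\det\Phi$ is nowhere zero and $\Phi$ is an isomorphism at every point of $C$, as desired. (Equivalently, $\Phi$ is injective as a map of sheaves with torsion cokernel of length $\deg N_{C/\PP^d}-\deg\bigoplus_i N_{C\to p_i}=0$.)

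I expect the crux to be the fibrewise identification at $q$ together with the degree bookkeeping it relies on: the description of $N_{C\to p_i}$ as spanned by the directions towards $p_i$ is valid only on the open locus where the pointing-bundle sequence is exact, so the isomorphism can be read off directly only at a point lying off every $p_i$, and it is the computation $\deg N_{C\to p}=d+2$ extracted from Example~\ref{E1} that propagates it to an isomorphism of bundles.
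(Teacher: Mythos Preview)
Your proof is correct and follows essentially the same route as the paper: identify the fibre of $N_{C\to p_i}$ at a point $q\neq p_i$ with the secant direction $\overline{qp_i}$ modulo $T_qC$, argue that these directions together with $T_qC$ span $T_q\PP^d$ because the divisor $2q+p_1+\cdots+p_{d-1}$ of degree $d+1$ cannot lie in a hyperplane, and then promote the fibrewise isomorphism to a global one by matching degrees. The paper phrases the spanning step as ``a hyperplane meeting $C$ in $d+1$ points is impossible'' and checks injectivity at every $q\neq p_i$ before invoking the degree count, while you check at a single $q$ and run the determinant argument; these are equivalent, and your explicit Hirzebruch-surface computation of $\deg N_{C\to p}=d+2$ fills in a step the paper leaves to Example~\ref{E1} and Lemma~\ref{L1}.
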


\begin{proof}
Suppose that $p_1, \dots, p_{d-1}$ are distinct points on $C$ and $r \in C$ is any point with $r \neq p_i$ for all $i$. For any $i$ the image of the fiber of $N_{C \to p_i}$ over $r$ in the fiber of $N_{C/\PP^d}$ over $r$ is
$$\frac{T_{L_i,r} + T_{C,r}}{T_{C,r}}$$
where $L_i$ is the line from $p_i$ to $r$. Therefore the natural map
$$F:N_{C \to p_1} \oplus N_{C \to p_2} \oplus \dots \oplus N_{C \to p_{d-1}} \to N_{C/\PP^{d}}$$
is injective on the fiber over $r$ if the projective tangent space $\mathbb{T}_r(C)$ is not contained in the hyperplane $H$ spanned by the points $p_1, p_2, \dots, p_{d-1},r$. If $\mathbb{T}_r(C)$ were contained in this span then $H$ would intersect $C$ in at least $d+1$ points counted with multiplicity, this is a contradiction since $C$ has degree $d$. Hence we conclude that the map $F$ is injective as a morphism of sheaves, because it is injective away from a finite set of points. Note that the bundles $N_{C \to p_1} \oplus N_{C \to p_2} \oplus \dots \oplus N_{C \to p_{d-1}}$ and $N_{C/\PP^d}$ have the same rank and first Chern class by Lemma \ref{L1} and Example \ref{E1}. Therefore the cokernel has rank $0$ and first Chern class $0$ implying the map $F$ is surjective.
\end{proof}

We will need several results from \cite{CLV22}, in particular those regarding the adjusted slope of a vector bundle on a connected nodal curve. Let $X$ be a connected nodal curve and
$$\nu: \tilde{X} \to X$$
the normalization of $X$. For a node $p \in X$ the fiber $\nu^{-1}(p)$ consists of two points $p_1,p_2$. If we pullback a vector bundle $\mathcal{E}$ on $X$ to $\tilde{X}$ the fibers of $\mathcal{M}=\nu^{*} \mathcal{E}$ over $p_1$ and $p_2$ are both naturally identified with $\mathcal{E}_{p}$.
Hence given a subbundle $\mathcal{F} \subset \mathcal{M}$ we can consider $\mathcal{F}_{p_1} \cap \mathcal{F}_{p_2}$ as a subspace of $\mathcal{E}_{p}$. We will use the notation of \cite{CLV22} and write $\codim_{\mathcal{F}}(\mathcal{F}_{p_1} \cap \mathcal{F}_{p_2})$ for the codimension of $\mathcal{F}_{p_1} \cap \mathcal{F}_{p_2}$ in either $\mathcal{F}_{p_1}$ or $\mathcal{F}_{p_2}$ which are equal because $\dim(\mathcal{F}_{p_1})=\dim(\mathcal{F}_{p_2})$. The following definition of the \textbf{adjusted slope} of a subbundle $\mathcal{F} \subset \mathcal{M}$ can be found on page $3$ of \cite{CLV22}.

\begin{definition}
Let $X$ be a connected curve with only nodes as singularities. The adjusted slope of a subbundle $\mathcal{F} \subset \mathcal{M}=\nu^* \mathcal{E}$ is
$$\mu^{\adj}(\mathcal{F})=\mu(\mathcal{F}) - \frac{1}{\rk(\mathcal{F})} \sum_{p \in X_{\sing}} \codim_{\mathcal{F}}(\mathcal{F}_{p_1} \cap \mathcal{F}_{p_2})$$
\end{definition}

If $X$ is smooth then the adjusted slope reduces to the ordinary definition of slope for vector bundles on smooth curves. We say that a vector bundle $\mathcal{E}$ on a connected nodal curve is semi-stable if $\mu^{\adj}(\mathcal{F}) \leq \mu(\mathcal{E})$ for all proper subbundles $\mathcal{F} \subset \nu^* \mathcal{E}$. The following result from the preliminary section of \cite{CLV22} allows us to reduce the semi-stability of a bundle on a general curve to the semi-stability of the bundle on a specific connected nodal curve.

\begin{proposition} \label{P5}
Let $\mathscr{C} \to \Delta$ be a family of connected nodal curves over the spectrum of a discrete valuation ring and $\mathcal{E}$ a vector bundle on $\mathscr{C}$. If the special fiber $\mathcal{E} \rvert_0$ is semi-stable then the general fiber $\mathcal{E} \rvert_t$ is semi-stable.
\end{proposition}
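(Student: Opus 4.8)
The plan is to prove the contrapositive: writing $\eta$ for the generic point and $0$ for the closed point of $\Delta$ (so the general fiber is $\mathcal{E}|_\eta$ and the special fiber is $\mathcal{E}|_0$), I assume that $\mathcal{E}|_\eta$ is unstable and produce a subbundle of $\nu^*(\mathcal{E}|_0)$ whose adjusted slope exceeds $\mu(\mathcal{E}|_0)$, contradicting the semi-stability of $\mathcal{E}|_0$. First I record that since $\mathscr{C} \to \Delta$ is flat with connected fibers of constant arithmetic genus, $\mathcal{E}|_0$ and $\mathcal{E}|_\eta$ have the same rank $n$ and the same slope $\mu := \mu(\mathcal{E}|_0)$.

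The first real step is to spread out a destabilizing subbundle. Choose a subbundle $\mathcal{F}_\eta \subsetneq \mathcal{E}|_\eta$ with $\mu(\mathcal{F}_\eta) > \mu$, and let $\mathcal{Q}$ be the scheme-theoretic image of the composite $\mathcal{E} \to (j_\eta)_*(\mathcal{E}|_\eta / \mathcal{F}_\eta)$, where $j_\eta \colon \mathscr{C}_\eta \hookrightarrow \mathscr{C}$ is the inclusion of the general fiber. Then $\mathcal{Q}$ is a quotient of $\mathcal{E}$, is flat over $\Delta$ (it is a subsheaf of $(j_\eta)_*(\mathcal{E}|_\eta/\mathcal{F}_\eta)$, which is torsion-free over the discrete valuation ring), and restricts to $\mathcal{E}|_\eta \twoheadrightarrow \mathcal{E}|_\eta/\mathcal{F}_\eta$ over $\eta$. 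Put $\mathcal{F} := \ker(\mathcal{E} \to \mathcal{Q})$, again flat over $\Delta$. Flatness of $\mathcal{Q}$ makes $0 \to \mathcal{F}|_0 \to \mathcal{E}|_0 \to \mathcal{Q}|_0 \to 0$ exact, and since rank and Euler characteristic are constant in the flat family $\mathcal{F}$ while $\chi(\mathcal{O}_{\mathscr{C}_0}) = \chi(\mathcal{O}_{\mathscr{C}_\eta})$, the subsheaf $\mathcal{F}|_0 \subseteq \mathcal{E}|_0$ has the same rank and degree as $\mathcal{F}_\eta$; in particular $\mu(\mathcal{F}|_0) > \mu$.

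It remains to convert the subsheaf $\mathcal{F}|_0$ of $\mathcal{E}|_0$ on the nodal curve $\mathscr{C}_0$ into a subbundle of $\nu^*(\mathcal{E}|_0)$. Pulling back along $\nu \colon \widetilde{\mathscr{C}_0} \to \mathscr{C}_0$, discarding the torsion of $\nu^*(\mathcal{F}|_0)$, and then saturating inside the locally free sheaf $\nu^*(\mathcal{E}|_0)$ yields a subbundle $\mathcal{G} \subseteq \nu^*(\mathcal{E}|_0)$ with $\rk(\mathcal{G}) = \rk(\mathcal{F}|_0) < n$. The heart of the argument — and the step I expect to be the main obstacle — is the inequality
$$\mu^{\adj}(\mathcal{G}) \ \geq\ \mu(\mathcal{F}|_0).$$
Passing from $\mathcal{F}|_0$ to $\mathcal{G}$ one loses degree precisely where $\mathcal{F}|_0$ fails to be locally free at a node and one gains degree upon saturating; the content of the inequality is that the corrections $\codim_{\mathcal{G}}(\mathcal{G}_{p_1} \cap \mathcal{G}_{p_2})$ subtracted in the adjusted slope are calibrated so that the net change is nonnegative — which is exactly why the adjusted slope is defined the way it is. I would verify this by a local computation at each node of $\mathscr{C}_0$ using the classification of torsion-free modules over a node, balancing the length of the torsion killed, the colength of the saturation, and the gluing defect; the estimate is already implicit in the preliminary section of \cite{CLV22}. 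Granting it, $\mu^{\adj}(\mathcal{G}) \geq \mu(\mathcal{F}|_0) > \mu = \mu(\mathcal{E}|_0)$, so $\mathcal{E}|_0$ is not semi-stable, which is the contrapositive of the assertion. An alternative that avoids the explicit local analysis is to deduce closedness of the unstable locus in $\Delta$ from boundedness of the family of destabilizing subsheaves together with properness of the relative Quot scheme over $\Delta$.
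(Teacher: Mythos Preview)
The paper does not prove this proposition at all: it simply quotes it from the preliminary section of \cite{CLV22}. So there is no ``paper's own proof'' to compare against; your write-up is in fact more detailed than anything the paper provides.

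That said, your outline is the standard argument and is correct. Spreading out a destabilizing subsheaf via flatness over the DVR, restricting to the special fiber, and then comparing with the adjusted slope on the normalization is exactly how this openness statement is established. You have also correctly isolated the one genuinely nontrivial step: the inequality $\mu^{\adj}(\mathcal{G}) \geq \mu(\mathcal{F}|_0)$ after killing torsion and saturating. Your diagnosis that this comes down to a local computation at each node, balancing the length of torsion killed, the colength of the saturation, and the gluing defect, is right, and this is precisely the computation carried out in \cite{CLV22}; it uses the classification of torsion-free modules over $k[[x,y]]/(xy)$ (each is a direct sum of copies of the ring and of its normalization). If you want the argument to be self-contained you would need to supply that local check rather than defer to \cite{CLV22}, but as a proof \emph{proposal} in the context of this paper --- which already takes the proposition as a black box from the same reference --- what you have written is entirely adequate.
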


In the final section on tetragonal curves we will need another Proposition from the \newline preliminary section of \cite{CLV22}.

\begin{proposition} \label{P6}
Let $\mathcal{E}$ be a vector bundle on a reducible nodal curve $X_1 \cup X_2$. If $\mathcal{E} \rvert_{X_1}$ and $\mathcal{E} \rvert_{X_2}$ are both semi-stable then $\mathcal{E}$ is semi-stable.
\end{proposition}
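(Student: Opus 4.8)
\textit{Proof proposal.} The plan is to reduce the adjusted--slope inequality on $X=X_1\cup X_2$ to the two inequalities already known on $X_1$ and on $X_2$, using additivity of degree and a partition of the node set. Write $\nu:\tilde X\to X$ for the normalization. Since the only points of $X_1\cap X_2$ are nodes, $\tilde X$ is the disjoint union $\tilde X_1\sqcup\tilde X_2$ of the normalizations $\nu_i:\tilde X_i\to X_i$, and $X_{\mathrm{sing}}$ is partitioned into the nodes internal to $X_1$, the nodes internal to $X_2$, and the connecting nodes $\Sigma=X_1\cap X_2$; over $p\in\Sigma$ the two branch points $p_1,p_2$ lie one on $\tilde X_1$ and one on $\tilde X_2$. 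With the convention $\deg\mathcal{G}=\chi(\mathcal{G})-\rk(\mathcal{G})\chi(\O_Y)$ used in \cite{CLV22}, a Mayer--Vietoris computation with $0\to\mathcal{E}\to\mathcal{E}\rvert_{X_1}\oplus\mathcal{E}\rvert_{X_2}\to\mathcal{E}\rvert_{\Sigma}\to 0$ gives $\deg\mathcal{E}=\deg(\mathcal{E}\rvert_{X_1})+\deg(\mathcal{E}\rvert_{X_2})$, and because the rank is the same on each piece, $\mu(\mathcal{E})=\mu(\mathcal{E}\rvert_{X_1})+\mu(\mathcal{E}\rvert_{X_2})$.

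Now take a proper subbundle $\mathcal{F}\subset\nu^*\mathcal{E}$ of rank $r$ and set $\mathcal{F}_i=\mathcal{F}\rvert_{\tilde X_i}\subset\nu^*\mathcal{E}\rvert_{\tilde X_i}=\nu_i^*(\mathcal{E}\rvert_{X_i})$, which is again a subbundle of rank $r$. Since $\tilde X=\tilde X_1\sqcup\tilde X_2$, degree is additive over the two pieces, so $\mu(\mathcal{F})=\mu(\mathcal{F}_1)+\mu(\mathcal{F}_2)$. Near a node internal to $X_i$ the curves $\tilde X$ and $\tilde X_i$ agree and so do $\mathcal{F}$ and $\mathcal{F}_i$, hence the codimension term at such a node is the same whether computed for $\mathcal{F}$ or for $\mathcal{F}_i$. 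Splitting the sum in the definition of $\mu^{\adj}$ along the three types of nodes then gives
$$\mu^{\adj}(\mathcal{F})=\mu^{\adj}(\mathcal{F}_1)+\mu^{\adj}(\mathcal{F}_2)-\frac{1}{r}\sum_{p\in\Sigma}\codim_{\mathcal{F}}(\mathcal{F}_{p_1}\cap\mathcal{F}_{p_2}).$$

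Each summand on the right is nonnegative, so $\mu^{\adj}(\mathcal{F})\leq\mu^{\adj}(\mathcal{F}_1)+\mu^{\adj}(\mathcal{F}_2)$. If $\mathcal{F}_i$ is a proper subbundle of $\nu_i^*(\mathcal{E}\rvert_{X_i})$, semi-stability of $\mathcal{E}\rvert_{X_i}$ yields $\mu^{\adj}(\mathcal{F}_i)\leq\mu(\mathcal{E}\rvert_{X_i})$; if instead $\mathcal{F}_i$ is the whole bundle, then every intersection $(\mathcal{F}_i)_{p_1}\cap(\mathcal{F}_i)_{p_2}$ at a node of $X_i$ is a full fiber, so $\mu^{\adj}(\mathcal{F}_i)=\mu(\mathcal{F}_i)=\mu(\mathcal{E}\rvert_{X_i})$, and the inequality still holds. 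Adding the two and using the first paragraph, $\mu^{\adj}(\mathcal{F})\leq\mu(\mathcal{E}\rvert_{X_1})+\mu(\mathcal{E}\rvert_{X_2})=\mu(\mathcal{E})$, which is precisely semi-stability of $\mathcal{E}$ on $X$.

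The argument is short and the bulk of it is bookkeeping; there is no deep obstacle. The one step that genuinely needs care is the additivity $\mu(\mathcal{E})=\mu(\mathcal{E}\rvert_{X_1})+\mu(\mathcal{E}\rvert_{X_2})$ of the first paragraph, which looks odd but is forced by the Euler-characteristic normalization of degree on nodal curves and is exactly what makes the node-partition identity for $\mu^{\adj}$ balance. A secondary point is the bookkeeping convention that the subbundles tested in the semi-stability definition have constant rank across the components of $\tilde X$, so that $\rk(\mathcal{F})$ is unambiguous and equals $\rk(\mathcal{F}_1)=\rk(\mathcal{F}_2)$; with that understood the proof goes through as written (and if $X_1$ or $X_2$ is itself disconnected one simply applies the same comparison one connected component at a time).
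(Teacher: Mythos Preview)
The paper does not supply its own proof of this proposition; it is quoted verbatim from the preliminary section of \cite{CLV22} and used as a black box. So there is nothing in the present paper to compare against, and the question is simply whether your argument stands on its own.

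It does. Your partition of the node set into internal nodes of $X_1$, internal nodes of $X_2$, and connecting nodes $\Sigma$ is exactly what makes the bookkeeping work, and the resulting identity
\[
\mu^{\adj}(\mathcal{F})=\mu^{\adj}(\mathcal{F}_1)+\mu^{\adj}(\mathcal{F}_2)-\frac{1}{r}\sum_{p\in\Sigma}\codim_{\mathcal{F}}(\mathcal{F}_{p_1}\cap\mathcal{F}_{p_2})
\]
is correct. The additivity $\mu(\mathcal{E})=\mu(\mathcal{E}\rvert_{X_1})+\mu(\mathcal{E}\rvert_{X_2})$ that you flag as the delicate point is indeed the one nontrivial input, and it holds because degree (computed via pullback to the normalization, equivalently via Euler characteristic) is additive over $\tilde X=\tilde X_1\sqcup\tilde X_2$ while the rank is constant. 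Your handling of the boundary case $\mathcal{F}_i=\nu_i^*(\mathcal{E}\rvert_{X_i})$ is also correct: the gluing at internal nodes is then perfect, so $\mu^{\adj}(\mathcal{F}_i)=\mu(\mathcal{E}\rvert_{X_i})$ and the inequality $\mu^{\adj}(\mathcal{F}_i)\le\mu(\mathcal{E}\rvert_{X_i})$ survives. This is the standard argument one would expect to find in \cite{CLV22}.
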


We will use Proposition \ref{P5} to prove Theorem \ref{MT} by letting $X=X_1 \cup X_2 \cup X_3$ for rational curves $X_i$ and then showing that $N_{S/\PP^{g-1}} \rvert_X$ is semi-stable with respect to the adjusted slope. To calculate the adjusted slope we need to be able to compute $N_{S/\PP^{g-1}} \rvert_{X_i}$ for each of the components $X_i$. This section ends with a series of lemmas that will allow us to compute this bundle for a few classes of curves on $S$. Note that for a general trigonal canonical curve we have $S \cong \PP^1 \times \PP^1$ if the genus is even and $S \cong \Bl_p \PP^2$ when the genus is odd.

\begin{lemma} \label{L2}
Suppose $Y \subset \PP^n$ is a subvariety of dimension $d > 0$ and let $H \cong \PP^{n-1}$ be a hyperplane meeting $Y$ transversely in a $(d-1)$-dimensional subvariety $X=Y \cap H$. Then
$$N_{X/\PP^{n-1}} \cong N_{Y/\PP^n} \rvert_X$$
\end{lemma}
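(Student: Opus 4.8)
The plan is to reduce the statement to a clean diagram chase with the normal bundle sequences for the flag $X \subset Y \subset \PP^n$ and $X \subset H \subset \PP^n$. First I would recall that since $H \cong \PP^{n-1}$ is a hyperplane, its normal bundle in $\PP^n$ is $N_{H/\PP^n} \cong \OO_{\PP^{n-1}}(1)$, and restricting to $X$ gives $N_{H/\PP^n}|_X \cong \OO_X(1)$. Similarly, because $X = Y \cap H$ is a transverse intersection, $X$ is a Cartier divisor on $Y$ cut out by the section of $\OO_Y(1)$ coming from $H$, so $N_{X/Y} \cong \OO_Y(1)|_X \cong \OO_X(1)$ as well. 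The transversality hypothesis is exactly what guarantees these are honest normal bundles (locally free of the expected rank) rather than something with embedded or excess components.

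Next I would write down the two short exact sequences of normal bundles associated to the two chains. For $X \subset H \subset \PP^n$:
\[
0 \longrightarrow N_{X/H} \longrightarrow N_{X/\PP^n} \longrightarrow N_{H/\PP^n}|_X \longrightarrow 0,
\]
and for $X \subset Y \subset \PP^n$, after restricting the sequence $0 \to N_{Y/\PP^n}|_X \to N_{\PP^n}|_X \to \cdots$ appropriately, I would use
\[
0 \longrightarrow N_{X/Y} \longrightarrow N_{X/\PP^n} \longrightarrow N_{Y/\PP^n}|_X \longrightarrow 0.
\]
The key point is that under the transversality assumption $T_H|_X + T_Y|_X = T_{\PP^n}|_X$ inside each fiber, with $T_X = T_H|_X \cap T_Y|_X$. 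This gives a natural isomorphism of the two ``complementary'' quotients: the composite $N_{X/Y} \hookrightarrow N_{X/\PP^n} \twoheadrightarrow N_{H/\PP^n}|_X$ is an isomorphism (both sides are $\OO_X(1)$, and transversality makes the map an isomorphism fiberwise), and dually the composite $N_{X/H} \hookrightarrow N_{X/\PP^n} \twoheadrightarrow N_{Y/\PP^n}|_X$ is an isomorphism.

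From the second of these isomorphisms, $N_{X/\PP^{n-1}} = N_{X/H} \cong N_{Y/\PP^n}|_X$, which is exactly the claim. Alternatively, and perhaps more transparently, I would observe that $T_Y|_X / T_X = T_Y|_X / (T_H|_X \cap T_Y|_X) \cong (T_H|_X + T_Y|_X)/T_H|_X = T_{\PP^n}|_X / T_H|_X = N_{H/\PP^n}|_X$, which identifies $N_{X/Y} \cong N_{H/\PP^n}|_X$; and symmetrically $N_{X/H} = T_H|_X/T_X \cong T_{\PP^n}|_X/T_Y|_X = N_{Y/\PP^n}|_X$, giving the result directly. I expect the main obstacle to be purely bookkeeping: making sure the transversality hypothesis is invoked correctly so that all the sheaves in sight are locally free of the expected ranks and the fiberwise linear-algebra identity $V/(A\cap B) \cong (A+B)/B$ globalizes to a bundle isomorphism. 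Once that is set up, the argument is the second isomorphism theorem applied fiber by fiber, promoted to vector bundles via the snake lemma on the two normal bundle sequences.
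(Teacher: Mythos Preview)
Your argument is correct and is essentially the same as the paper's: both exploit the two factorizations $X \subset Y \subset \PP^n$ and $X \subset H \subset \PP^n$, use transversality to see that the induced map $N_{X/Y} \to N_{H/\PP^n}|_X$ between the two copies of $\O_X(1)$ is a fiberwise isomorphism, and then conclude via the snake lemma. The only cosmetic difference is that the paper writes the diagram with tangent sequences (rows $0 \to T_X \to T_Y|_X \to \O_X(1) \to 0$ and $0 \to T_{\PP^{n-1}}|_X \to T_{\PP^n}|_X \to \O_X(1) \to 0$) and takes cokernels, whereas you phrase everything directly in terms of the normal bundle sequences sharing the middle term $N_{X/\PP^n}$; your second-isomorphism-theorem remark makes the linear algebra underlying both versions explicit.
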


\begin{proof}
We can factor the inclusion $X \subset \PP^n$ either as $X \subset Y \subset \PP^n$ or as $X \subset \PP^{n-1} \subset \PP^n$. We get a commutative diagram of the form
\[\begin{tikzcd}
	0 & {T_{X}} & {T_{Y} \rvert_X} & {\O_{X}(1)} & 0 \\
	0 & {T_{\PP^{n-1}} \rvert_{X}} & {T_{\PP^{n}} \rvert_X} & {\O_{X}(1)} & 0
	\arrow[from=1-1, to=1-2]
	\arrow[from=1-4, to=1-5]
	\arrow[from=1-3, to=1-4]
	\arrow[from=1-2, to=1-3]
	\arrow[from=2-1, to=2-2]
	\arrow[from=2-2, to=2-3]
	\arrow[from=2-3, to=2-4]
	\arrow[from=2-4, to=2-5]
	\arrow["\alpha", from=1-2, to=2-2]
	\arrow["\beta", from=1-3, to=2-3]
	\arrow["\gamma", from=1-4, to=2-4]
\end{tikzcd}\]
Since $Y$ meets $H$ transversely the image of $\beta$ is not contained in $T_{\PP^{n-1}} \rvert_{X}$. It follows that $\gamma$ is a nonzero morphism of line bundles so it must have rank $1$, i.e. it is an isomorphism. Then the Snake Lemma gives $\coker(\alpha) \cong \coker(\beta)$ as desired.
\end{proof}

\begin{lemma} \label{L3}
Let $W \subset \PP^n$ be a minimal degree nondegenerate surface scroll (i.e. a ruled surface over $\PP^1$) and $Y$ a rational normal curve of degree $k$ with $2 \leq k \leq n-1$. If there exists a linear space $\PP^k \subset \PP^n$ with $Y \subset W \cap \PP^k$ then $Y= W \cap \PP^k$.
\end{lemma}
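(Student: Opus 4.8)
The plan is to combine the classification of curves on a Hirzebruch surface with a linear-span computation. Write $W \cong \mathbb{F}_e$, let $C_0$ and $f$ denote the negative section and a fiber, so that $\Pic W = \Z C_0 \oplus \Z f$ and the hyperplane class is $H = C_0 + af$ with $a + b = n - 1$, $a \geq b \geq 1$ and $e = a - b$; recall that $W$, being a smooth variety of minimal degree, is projectively normal. We may assume the $\PP^k$ of the statement is the linear span $\overline Y$, since $Y$ already spans a $\PP^k$. As $W$ is nondegenerate it lies in no $\PP^k$ with $k \leq n-1$, so $W \cap \PP^k$ is a proper closed subset of $W$ containing the curve $Y$, hence one-dimensional.

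The first step is to pin down the class of $Y$. Writing $[Y] = \alpha C_0 + \beta f$, irreducibility and $\deg_H Y = k \geq 2$ give $\alpha \geq 1$, and since a rational normal curve is smooth rational, the adjunction formula gives $p_a(Y) = (\alpha - 1)\bigl(\beta - 1 - \tfrac{e\alpha}{2}\bigr) = 0$. If $\alpha \geq 2$ this forces $\beta = 1 + \tfrac{e\alpha}{2}$, whence $k = \alpha b + \beta = \tfrac{\alpha(n-1)}{2} + 1 \geq n$, contradicting $k \leq n - 1$. Thus $\alpha = 1$, so $[Y] = C_0 + (k-b)f$ and $Y \cdot f = 1$: the curve $Y$ meets every ruling line of $W$ transversely in a single point.

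The crux is to show that for every $y \in Y$ the ruling line $\ell_y$ through $y$ is not contained in $\overline Y = \PP^k$. Suppose it were, and set $D = Y \cup \ell_y$, a connected nodal curve with one node, of class $C_0 + (k - b + 1)f$, so $\deg_H D = k+1$ and $p_a(D) = 0$. One computes $H - D = (n - k - 2)f$, and since $k \leq n - 1$ we get $H^1(\O_W(H - D)) = H^1(\PP^1, \O_{\PP^1}(n-k-2)) = 0$; together with the projective normality of $W$ this shows $D$ is linearly normal in $\PP^n$. A Mayer--Vietoris sequence at the node gives $H^1(\O_D(H)) = 0$, so $h^0(\O_D(H)) = \chi(\O_D(H)) = k + 2$ and hence $\overline D = \PP^{k+1}$. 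But $\ell_y \subseteq \overline Y$ would give $\overline D = \overline Y = \PP^k$, a contradiction.

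Granting this, the lemma follows. For $q \in W \cap \PP^k$, let $\ell$ be the ruling line through $q$ and $y = \ell \cap Y$; then $q$ and $y$ both lie in $\ell \cap \PP^k$, so $q \neq y$ would put the line $\ell$ inside $\PP^k$, contrary to the claim. Hence $W \cap \PP^k$ and $Y$ have the same support. Finally, for $y \in Y$ the claim gives $T_y \ell_y \not\subseteq T_y\PP^k$, hence $T_y W \not\subseteq T_y\PP^k$ and so $\dim(T_y W \cap T_y\PP^k) = 1$; combined with $T_y Y \subseteq T_y(W \cap \PP^k) \subseteq T_y W \cap T_y\PP^k$, this shows $W \cap \PP^k$ has tangent space of dimension equal to its local dimension at every point, so it is smooth, hence reduced, hence equal to $Y$. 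The step I expect to be the main obstacle is exactly the span computation for $D = Y \cup \ell_y$: the geometric content of the lemma is that one cannot adjoin a ruling line to $Y$ without enlarging the span, and this is where the two hypotheses — minimal degree (projective normality) and $k \leq n - 1$ (the vanishing $H^1(\O_W((n-k-2)f)) = 0$) — are genuinely used; everything else is formal.
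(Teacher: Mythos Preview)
Your proof is correct and shares its overall architecture with the paper's: both first show $Y$ is a section class ($Y\cdot f=1$) and then show no ruling line through a point of $Y$ can lie in $\PP^k$. The tools, however, are quite different. For the first step the paper argues geometrically---if $\alpha\geq 2$ then every fiber is multisecant to $Y\subset\PP^k$, hence (being a line) lies in $\PP^k$, forcing $W\subset\PP^k$---whereas you use adjunction and a degree bound. For the second step the paper gives a degree count: adjoin the offending ruling line $L$ and then $n-k-1$ further fibers to fill out a hyperplane $H$; then $H\cap W$ has degree at least $n$ against $\deg W=n-1$. Your cohomological version computes the span of $Y\cup\ell_y$ directly via $H^1(\O_W((n-k-2)f))=0$ and linear normality of $W$. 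The paper's argument is more elementary and uses nothing beyond B\'ezout-type reasoning; your argument is more systematic, makes the role of the hypotheses $k\leq n-1$ and minimal degree transparent as specific vanishing statements, and as a bonus yields scheme-theoretic equality $Y=W\cap\PP^k$ (your final paragraph on tangent spaces), which the paper's proof establishes only set-theoretically.
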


\begin{proof}
We can write the class $[Y]=aE+mF$ for some $a,m$ where $E$ is a section of $W$ considered as a $\PP^1$ bundle and $F$ is a fiber. If $a=0$ then $Y$ is a disjoint union of fibers which is a contradiction. Suppose $a>1$ so that $F \cdot [Y]=a$, then the fibers of $W$ meet $Y$ in multiple points hence they meet $\PP^k$ in multiple points. But the fibers of $W$ are lines so this can only happen if all these lines are contained in $\PP^k$. This implies $W \subset \PP^k$ which contradicts the nondegenerate condition. We conclude $a=1$ i.e.
$$[Y]=E+mF$$
Now suppose $Y \cup \{p\} \subset W \cap \PP^k$ for some $p \notin Y$, we will argue towards a contradiction. Since $F\cdot [Y]=1$ the fiber $L$ of $W$ containing $p$ intersects $Y$ in another point $q \neq p$. We have $p,q \in \PP^k$ so that $L$ must be contained in $\PP^k$. Thus we see that $Y \cup L \subset W \cap \PP^k$. Since $W$ is nondegenerate we can find $n-k-1$ fibers $L_i$ of $W$ such that the curves $Y \cup L, L_1, \dots, L_{n-k-1}$ span a hyperplane $H \cong \PP^{n-1}$. By construction $H$ contains the curve
$$C=Y \cup L \cup L_1 \cup \dots \cup L_{n-k-1}$$
We get that $W$, an irreducible surface of degree $n-1$, contains a curve of at least degree $n$ as a hyperplane section and this is a contradiction.
\end{proof}

\begin{lemma} \label{L6}
Suppose $S=\Bl_p \PP^2$ and $\phi: S \to \PP^{g-1}$ is embedded by the complete linear series $\lvert E + (\frac{g-1}{2})F \rvert$. If $C$ is a smooth curve with
$[C]=E+dF$ where $1 \leq d \leq \frac{g-1}{2}$, then $\phi(C)$ is a rational normal curve of degree $k=\frac{g+2d-3}{2}$ sitting in some linear space $\Lambda \cong \PP^k \subset \PP^{g-1}$ and we have
$$N_{S/\PP^{g-1}} \rvert_C \cong N_{C/\PP^{k}} \oplus \O_{\PP^1}(k+1)^{\oplus g-k-2}$$
\end{lemma}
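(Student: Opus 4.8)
The plan is to exhibit $\phi(C)$ as a transverse hyperplane section of the scroll $S$ restricted to successively smaller linear spaces, and then apply Lemma \ref{L2} repeatedly. First I would verify the degree and rationality claims: since $[C] = E + dF$ and the hyperplane class on $S$ is $\mathcal{H} = E + \frac{g-1}{2}F$, one computes $\deg \phi(C) = \mathcal{H} \cdot C = k$ using $E^2 = -1$, $E\cdot F = 1$, $F^2 = 0$ on $\Bl_p\PP^2$, giving $k = \frac{g+2d-3}{2}$; and $\phi(C) \cong \PP^1$ with $\deg \mathcal{O}_C(1) = k$ means $h^0(\mathcal{O}_C(1)) = k+1$, so $\phi(C)$ spans exactly a $\PP^k = \Lambda$, hence is a rational normal curve of degree $k$ in $\Lambda$. (I should also check $1 \le d \le \frac{g-1}{2}$ forces $2 \le k \le g-2$ so Lemma \ref{L3} applies.)

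Next I would show $C = S \cap \Lambda$ as schemes. Since $S$ has minimal degree $g-2$ and is nondegenerate in $\PP^{g-1}$, and $\phi(C)$ is a rational normal curve of degree $k$ with $2 \le k \le g-2$ contained in $S \cap \Lambda$, Lemma \ref{L3} (with $W = S$, $n = g-1$) gives exactly $\phi(C) = S \cap \Lambda$. The key geometric point I then need is that this intersection is \emph{transverse} — i.e. $\Lambda$ meets $S$ transversally along $C$. This is where I expect the main obstacle to lie: transversality can fail a priori along a curve, and I would need to argue it using generality or a direct tangent-space computation. The natural approach is to note that $C$ moves in a base-point-free linear system on $S$ (since $[C] = E + dF$ with $d \ge 1$ is very ample or at least bpf on $\Bl_p\PP^2$ after the embedding), so a general member is smooth and meets the generic tangent hyperplane transversally; alternatively, because $C$ is smooth of the expected dimension and $\dim(S) + \dim(\Lambda) - \dim(\PP^{g-1}) = 2 + k - (g-1) = 1 = \dim C$, the intersection is dimensionally transverse, and one upgrades this to scheme-theoretic transversality along the smooth locus by a local Jacobian argument at each point of $C$.

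Once transversality is established, I would build a chain of linear subspaces $\Lambda = \Lambda_k \subset \Lambda_{k+1} \subset \cdots \subset \Lambda_{g-1} = \PP^{g-1}$, choosing each hyperplane $\Lambda_{j} \subset \Lambda_{j+1}$ to contain $\Lambda_k$ and to meet $S$ transversally — possible since the bad hyperplanes (those tangent to $S$ somewhere along the relevant surface section, or failing to cut down dimension) form a proper closed subset of the dual projective space. Setting $S_j = S \cap \Lambda_{j+1}$ (a minimal-degree scroll or rational normal curve in $\Lambda_{j+1}$ at each stage), Lemma \ref{L2} gives $N_{S_j / \Lambda_j} \cong N_{S_{j+1}/\Lambda_{j+1}}|_{S_j}$, and composing down the chain yields $N_{C/\Lambda} \cong N_{S/\PP^{g-1}}|_C$ as bundles on $\PP^1$. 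Now both sides are vector bundles on $\PP^1$, so they split as sums of line bundles; $N_{C/\Lambda} = N_{\phi(C)/\PP^k} \cong \mathcal{O}_{\PP^1}(k+2)^{\oplus k-1}$ by Lemma \ref{L1}. To finish I pass instead through $N_{C/S} \subset N_{C/\PP^{g-1}}$: the normal bundle sequence $0 \to N_{C/S} \to N_{C/\PP^{g-1}} \to N_{S/\PP^{g-1}}|_C \to 0$ combined with $N_{C/S} = \mathcal{O}_C(C) = \mathcal{O}_{\PP^1}(C^2)$ (a line bundle) shows the splitting type of $N_{S/\PP^{g-1}}|_C$ is determined by degree and rank once one knows it has the form $N_{C/\PP^k} \oplus \mathcal{O}_{\PP^1}(k+1)^{\oplus g-k-2}$; the degree bookkeeping ($\deg N_{S/\PP^{g-1}}|_C = (g-2)\cdot\text{(something)} \ldots$) pins down that the complementary summand is $\mathcal{O}_{\PP^1}(k+1)^{\oplus g-k-2}$. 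Concretely, I would compute $\deg N_{S/\PP^{g-1}}|_C$ directly via $\deg N_{S/\PP^{g-1}} = \deg T_{\PP^{g-1}}|_S - \deg T_S$ restricted to $C$, check the rank is $g-1-2 = g-3 = (k-1) + (g-k-2)$, and match the degree $k+2)(k-1) + (k+1)(g-k-2)$ against the computed value, which forces the stated decomposition.
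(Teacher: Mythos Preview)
Your proposal has two genuine gaps.

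First, the repeated-slicing argument via Lemma~\ref{L2} cannot produce the claimed isomorphism $N_{C/\Lambda} \cong N_{S/\PP^{g-1}}|_C$: these bundles have different ranks ($k-1$ versus $g-3$), so no chain of applications of Lemma~\ref{L2} can identify them. The underlying issue is that $S$ is a surface, so after one hyperplane slice $S \cap \Lambda_{g-2}$ is already a curve; a further slice gives a finite scheme, and Lemma~\ref{L2} no longer applies. If instead you force each $\Lambda_j$ to contain $\Lambda$, then $S \cap \Lambda_j$ is reducible (it is $C$ together with $r-d$ rulings), the transversality hypothesis of Lemma~\ref{L2} is delicate, and restricting the normal bundle of this reducible curve to the component $C$ introduces modifications at the nodes that you have not accounted for.

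Second, the endgame is circular: you write that the splitting type ``is determined by degree and rank once one knows it has the form $N_{C/\PP^k}\oplus\O_{\PP^1}(k+1)^{\oplus g-k-2}$,'' but that form is exactly the assertion to be proved. On $\PP^1$, rank and degree alone never pin down a splitting type, so the bookkeeping you sketch cannot close the argument.

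The paper proceeds differently. After establishing that $\phi(C)$ is a rational normal curve in $\Lambda\cong\PP^k$ (as you do), it sets up the commutative diagram comparing the normal sequences of $C\subset S$ and $\PP^k\subset\PP^{g-1}$, and uses Lemma~\ref{L3} to show that the induced map $N_{C/S}\to N_{\Lambda/\PP^{g-1}}|_C$ is injective. The snake lemma then yields
\[
0 \longrightarrow N_{C/\PP^k} \longrightarrow N_{S/\PP^{g-1}}|_C \longrightarrow \mathcal{Q} \longrightarrow 0,
\]
where $\mathcal{Q}=\coker\bigl(N_{C/S}\to N_{\Lambda/\PP^{g-1}}|_C\bigr)$. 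The substantive step you are missing is the identification of $\mathcal{Q}$: the paper chooses explicit coordinates so that the map $N_{C/S}\to N_{\Lambda/\PP^{g-1}}|_C\cong\O_{\PP^1}(k)^{\oplus g-k-1}$ is given by the tuple of forms $(x^{r-d},x^{r-d-1}y,\dots,y^{r-d})$, and then writes down an explicit $(g-k-1)\times(g-k-2)$ matrix of linear forms exhibiting $\mathcal{Q}^\vee\cong\O_{\PP^1}(-k-1)^{\oplus g-k-2}$. This concrete computation is what forces the balanced splitting; only afterward does the $\Ext^1$ vanishing give the direct-sum decomposition.
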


\begin{proof}
If $C \subset S$ is smooth with $[C]=E+dF$ then by adjunction we have
$$2g(C)-2=((d-3)F-E)(dF+E)=-2$$
so that $C$ is rational. Let $r=\frac{g-1}{2}$ and consider the exact sequence
\[\begin{tikzcd}
	0 & {\O_{S}((r-d)F)} & {\O_S(E+rF)} & {\O_C(E+rF)} & 0
	\arrow[from=1-1, to=1-2]
	\arrow[from=1-2, to=1-3]
	\arrow[from=1-3, to=1-4]
	\arrow[from=1-4, to=1-5]
\end{tikzcd}\]
which combined with the fact that $h^1((r-d)F)=0$ implies that the map 
$$H^0(\O_S(E+rF)) \to H^0(\O_C(E+rF))$$ 
is surjective. This allows us to compute the dimension of $H^0(\O_C(E+rF))$.
$$h^0(\O_C(E+rF))=h^0(\O_S(E+rF))-h^0(\O_S((r-d)F))=$$
$$(2r+1)-(r-d+1)=r+d=k+1$$
We also know that the degree of the linear series $\mathscr{D}$ on $C$ given by restricting $\lvert E+rF \rvert$ is
$$(E+rF)(E+dF)=r+d-1=k$$
Then $\mathscr{D}$ is a linear series on $C \cong \PP^1$ of degree and dimension $k$. It follows that $\mathscr{D}$ is the complete linear series associated to $\O_{\PP^1}(k)$. Therefore $\lvert E+rF \rvert$ maps $C$ to a rational normal curve in some linear subspace $\Lambda \cong \PP^{k} \subset \PP^{g-1}$. We have a commutative diagram of the form
\[\begin{tikzcd}
	0 & {T_C} & {T_S \rvert_C} & {N_{C/S}} & 0 \\
	0 & {T_{\PP^{k}} \rvert_C} & {T_{\PP^{g-1}} \rvert_C} & {\O_{\PP^1}(k)^{\oplus g-k-1}} & 0
	\arrow[from=1-1, to=1-2]
	\arrow[from=1-2, to=1-3]
	\arrow[from=1-2, to=2-2]
	\arrow[from=1-3, to=1-4]
	\arrow[from=1-3, to=2-3]
	\arrow[from=1-4, to=1-5]
	\arrow[from=1-4, to=2-4]
	\arrow[from=2-1, to=2-2]
	\arrow[from=2-2, to=2-3]
	\arrow[from=2-3, to=2-4]
	\arrow[from=2-4, to=2-5]
\end{tikzcd}\]
We claim the right hand map $N_{C/S} \to \O_{\PP^1}(k+1)^{\oplus g-k-1}$ is injective. This map is induced by the inclusion $T_{S} \rvert_C \to T_{\PP^{g-1}} \rvert_C$ and if there is a $p$ such that the map on fibers
$$N_{C/S,p} \to \C^{g-k-1}$$
is zero then we would have the inclusion of tangent spaces $T_{S,p} \subset T_{\Lambda,p}$. Recall that $S$ is a projective bundle over $\PP^1$ and let $\pi:S \to \PP^1$ be the projection map. If $\pi^{-1}(x)$ is the fiber containing $p$ then $T_{S,p} \subset T_{\Lambda,p}$ would imply that $\pi^{-1}(x) \subset \Lambda$ which contradicts Lemma \ref{L3}. The injectivity of $N_{C/S} \to \O_{\PP^1}(k+1)^{g-k-1}$ and the snake Lemma implies that there is an exact sequence
\[\begin{tikzcd}
	0 & {N_{C/\PP^k}} & {N_{S/\PP^{g-1}} \rvert_C} & {\mathcal{Q}} & 0
	\arrow[from=1-1, to=1-2]
	\arrow[from=1-2, to=1-3]
	\arrow[from=1-3, to=1-4]
	\arrow[from=1-4, to=1-5]
\end{tikzcd}\]
where $\mathcal{Q}$ is the cokernel of the map $N_{C/S} \to \O_{\PP^1}(k+1)^{\oplus g-k-1}$. To finish the proof it suffices to show that 
$$\mathcal{Q} \cong \O_{\PP^1}(k+1)^{\oplus g-k-2}$$
since this isomorphism and the calculation
$$\Ext^1(\mathcal{Q},N_{C/\PP^{k}}) = \Ext^1(\O_{\PP^1}(k+1)^{\oplus g-k-2},\O_{\PP^1}(k+2)^{\oplus k-1}) \cong
$$
$$\Ext^1({\O_{\PP^1}}^{\oplus g-k-2},\O_{\PP^1}(1)^{\oplus k-1}) \cong H^1(\O_{\PP^1}(1))^{\oplus (g-k-2)(k-1)}=0$$
implies the claimed splitting of $N_{S/\PP^{g-1}} \rvert_C$.
Assume that we have identified $S$ with the blowup of $\PP^2$ at the point $p=[0:0:1]$. Let $f$ be the equation of the curve $D \subset \PP^2$ whose strict transform is $C \subset \Bl_p \PP^2$. The degree $r$-forms
$$x^{r-d}f,x^{r-d-1}yf,\dots,y^{r-d}f$$
are linearly independent sections of $H^0(S,E+rF)$. Thus we can choose 
$$g_1, \dots, g_{k+1} \in H^0(S,E+rF)$$ 
such that the forms
$$x^{r-d}f,x^{r-d-1}yf, \dots, y^{r-d}f,g_1, \dots, g_{k+1}$$
give a basis for $H^0(S,E+rF)$. In other words the map
$$[x:y:z] \mapsto [x^{r-d}f: \dots: y^{r-d}f:g_1: \dots : g_{k+1}]$$
is an embedding of $S$ in $\PP^{g-1}$. With this choice of coordinates we have $\Lambda=V(z_0, \dots, z_{r-d})$ and
$$N_{\Lambda/\PP^{g-1}} \rvert_{C} = N_{\Lambda_0/\PP^{g-1}} \rvert_C \oplus \dots \oplus N_{\Lambda_{r-d}/\PP^{g-1}} \rvert_C$$
where $\Lambda_i=V(z_i)$ for $0 \leq i \leq r-d$. Furthermore for each $i$ we have a morphism
$$N_{C/S} \to N_{\Lambda_i/\PP^{g-1}} \rvert_C$$
which with respect to our coordinates is induced by the map $\O_{\PP^1} \to \O_{\PP^1}(k-2d+1)$ defined by $1 \mapsto x^{r-d-i}y^i$.
This shows that the map $N_{C/S} \to N_{\Lambda/\PP^{g-1}} \rvert_C = \O_{\PP^1}(k)^{\oplus g-k-1}$ is given by the $g-k-1=r-d+1$ forms $x^{r-d}, x^{r-d-1}y, \dots, y^{r-d}$, i.e. we have an exact sequence
\[\begin{tikzcd}
	0 & {\O_{\PP^1}(2d-1)} &&& {\O_{\PP^1}(k)^{\oplus g - k -1}} & {\mathcal{Q}} & 0
	\arrow[from=1-1, to=1-2]
	\arrow["{(x^{r-d},x^{r-d-1}y, \dots, y^{r-d})}", from=1-2, to=1-5]
	\arrow[from=1-5, to=1-6]
	\arrow[from=1-6, to=1-7]
\end{tikzcd}\]
dualizing we get an exact sequence
\[\begin{tikzcd}
	0 & {\mathcal{Q}^\vee} & {\O_{\PP^1}(-k)^{\oplus g - k -1}} & {\O_{\PP^1}(1-2d)} & 0
	\arrow[from=1-1, to=1-2]
	\arrow[from=1-2, to=1-3]
	\arrow[from=1-3, to=1-4]
	\arrow[from=1-4, to=1-5]
\end{tikzcd}\]
where the map $\Phi:\O_{\PP^1}(-k)^{\oplus g-k-1} \to \O_{\PP^1}(1-2d)$ is given by
$$(a_1,\dots,a_{g-k-1}) \mapsto \sum a_i x^{r-d-i}y^i$$
The morphism 
$$\O_{\PP^1}(-k-1)^{\oplus g - k - 2} \to \O_{\PP^1}(-k)^{\oplus g - k - 1}$$
given by the $(g-k-1) \times (g-k-2)$-matrix
$$
\begin{pmatrix}
y & 0 & 0 & \dots & 0 \\
-x & y & 0 & \dots & 0 \\
0 & -x & y & \dots & 0 \\
\vdots & \vdots & \vdots & \ddots & \vdots \\
0 & 0 & \dots & \ddots & y \\
0 & 0 & \dots & \dots & -x
\end{pmatrix}
$$
is injective and maps $\O_{\PP^1}(-k-1)^{\oplus g -k -2}$ onto $\ker(\Phi)=\mathcal{Q}^\vee$ so we conclude that $$\mathcal{Q} \cong \O_{\PP^1}(k+1)^{\oplus g - k - 1}$$ as desired.
\end{proof}

Using an analogous argument we get a similar result in the case when $S=\PP^1 \times \PP^1$.

\begin{lemma} \label{L7}
Suppose $S=\PP^1 \times \PP^1$ and $\phi:S \to \PP^{g-1}$ is embedded by the complete linear series $\lvert E + (\frac{g-2}{2})F \rvert$. If $C$ is a smooth curve with $[C]=E+dF$ where $1 \leq d \leq \frac{g-2}{2}$. Then $\phi(C)$ is a rational normal curve of degree $k=\frac{g+2d-2}{2}$ sitting in some linear space $\Lambda \cong \PP^{k} \subset \PP^{g-1}$ and we have
$$N_{S/\PP^{g-1}} \rvert_C \cong N_{C/\PP^{k}} \oplus \O_{\PP^1}(k+1)^{\oplus g-k-2}$$
\end{lemma}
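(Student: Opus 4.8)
The plan is to mimic the proof of Lemma \ref{L6} almost verbatim, replacing the blowup of $\PP^2$ with $\PP^1 \times \PP^1$ and adjusting the numerology accordingly. First I would verify the genus and degree claims: if $[C] = E + dF$ on $\PP^1 \times \PP^1$ (with $E, F$ the two rulings), adjunction gives $2g(C) - 2 = (E+dF)(E+dF + K_S) = (E+dF)(-2E - (d+... )F)$ — working out $K_S = -2E - 2F$ — so $C$ is rational, and the degree of $C$ under $\lvert E + rF\rvert$ with $r = \frac{g-2}{2}$ is $(E+rF)(E+dF) = r + d = k$. An analogous cohomology computation using the exact sequence
\[\begin{tikzcd}
	0 & {\O_S((r-d)F)} & {\O_S(E+rF)} & {\O_C(E+rF)} & 0
	\arrow[from=1-1, to=1-2]
	\arrow[from=1-2, to=1-3]
	\arrow[from=1-3, to=1-4]
	\arrow[from=1-4, to=1-5]
\end{tikzcd}\]
together with $h^1((r-d)F) = 0$ shows that $h^0(\O_C(E+rF)) = (2r+2) - (r-d+1) = r+d+1 = k+1$, so the restricted linear series is complete of degree and dimension $k$ and $\phi(C)$ is a rational normal curve in a $\Lambda \cong \PP^k$.

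Next I would run the same snake lemma diagram comparing the inclusions $C \subset S \subset \PP^{g-1}$ and $C \subset \Lambda \subset \PP^{g-1}$, obtaining a map $N_{C/S} \to N_{\Lambda/\PP^{g-1}}\rvert_C = \O_{\PP^1}(k+1)^{\oplus g-k-1}$ which I claim is injective as a bundle map (not just as sheaves). As in Lemma \ref{L6}, if it dropped rank at a point $p$ then $T_{S,p} \subset T_{\Lambda,p}$, forcing the ruling fiber through $p$ to lie in $\Lambda$, contradicting Lemma \ref{L3} (which applies since $S = \PP^1 \times \PP^1$ embedded by $\lvert E + rF \rvert$ is a minimal degree nondegenerate surface scroll, and $Y = C$ is a rational normal curve of degree $k$ with $2 \le k \le g-2$). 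The snake lemma then yields
\[\begin{tikzcd}
	0 & {N_{C/\PP^k}} & {N_{S/\PP^{g-1}}\rvert_C} & {\mathcal{Q}} & 0
	\arrow[from=1-1, to=1-2]
	\arrow[from=1-2, to=1-3]
	\arrow[from=1-3, to=1-4]
	\arrow[from=1-4, to=1-5]
\end{tikzcd}\]
with $\mathcal{Q} = \coker(N_{C/S} \to \O_{\PP^1}(k+1)^{\oplus g-k-1})$, and since $N_{C/S} = \O_{\PP^1}(k - 2d + ?)$ as a line bundle on $\PP^1$ — its degree is $\deg N_{C/S} = -\deg T_C + \deg(T_S\rvert_C) = 2 + (E+dF)(2E+2F)$, to be computed — the cokernel $\mathcal{Q}$ will be a rank $g-k-2$ bundle of the right total degree.

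To pin down $\mathcal{Q} \cong \O_{\PP^1}(k+1)^{\oplus g-k-2}$ I would use the explicit-coordinates argument from Lemma \ref{L6}: writing $S = \PP^1 \times \PP^1$ with bihomogeneous coordinates, choosing a section of $\O_S(E)$ (or $\O_S(dF)$ depending on how $E + dF$ is decomposed) and completing $x^{r-d}f, x^{r-d-1}yf, \dots, y^{r-d}f$ to a basis of $H^0(\O_S(E+rF))$, the map $N_{C/S} \to \O_{\PP^1}(k)^{\oplus g-k-1}$ is given by the row vector of monomials $(x^{r-d}, \dots, y^{r-d})$, and dualizing, $\mathcal{Q}^\vee$ is the kernel of $\O_{\PP^1}(-k)^{\oplus g-k-1} \to \O_{\PP^1}(\bullet)$, which is the image of the same bidiagonal Koszul-type matrix with entries $y$ and $-x$, giving $\mathcal{Q}^\vee \cong \O_{\PP^1}(-k-1)^{\oplus g-k-2}$. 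Finally, the Ext vanishing
\[
\Ext^1(\O_{\PP^1}(k+1)^{\oplus g-k-2}, \O_{\PP^1}(k+2)^{\oplus k-1}) \cong H^1(\O_{\PP^1}(1))^{\oplus(g-k-2)(k-1)} = 0
\]
forces the sequence to split. The main obstacle will be bookkeeping: getting the twist on $N_{C/S}$, the split type of $N_{\Lambda/\PP^{g-1}}\rvert_C$, and the monomial degrees all consistent under the $\PP^1 \times \PP^1$ normalization (where $r = \frac{g-2}{2}$ rather than $\frac{g-1}{2}$, and $k = \frac{g+2d-2}{2}$), so that the Koszul matrix genuinely has the stated source and target; everything else is a faithful transcription of the odd-genus case.
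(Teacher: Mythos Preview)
Your proposal is correct and is exactly what the paper does: the paper gives no separate proof of Lemma~\ref{L7}, stating only that ``an analogous argument'' to Lemma~\ref{L6} works, and your sketch is precisely that analogous argument with the numerology adjusted for $S=\PP^1\times\PP^1$ (where $E^2=0$, $h^0(\O_S(E+rF))=2r+2$, and $N_{C/S}\cong\O_{\PP^1}(2d)$ rather than $\O_{\PP^1}(2d-1)$). One small bookkeeping note: $N_{\Lambda/\PP^{g-1}}\rvert_C \cong \O_{\PP^1}(k)^{\oplus g-k-1}$, not $\O_{\PP^1}(k+1)^{\oplus g-k-1}$ (the paper itself has this same typo in the prose of Lemma~\ref{L6}), but you correct this later in your sketch and the Koszul-matrix computation goes through as written.
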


%


\section{The Destabilizing Subbundle}\label{sec:DSB}

The goal of this section is to show that if $C$ is a trigonal canonical curve then $N_{C/\PP^{g-1}}$ is not semi-stable. Recall that if $S \subset \PP^{g-1}$ is smooth of dimension $k \geq 2$ and $C \subset S$ then $N_{C/S}$ is a rank $k-1$ subbundle of $N_{C/\PP^{g-1}}$. Therefore to produce a destabilizing line subbundle of $N_{C/\PP^{g-1}}$ we exhibit a smooth surface $S$ containing $C$.

\subsection{The Surface Scroll}

Given any canonical curve $C$, Petri's theorem tells us that the homogeneous ideal of $C \subset \PP^{g-1}$ is generated by quadrics unless $C$ is trigonal or a smooth plane quintic. Furthermore, even when $C$ is trigonal there are always many independent quadrics vanishing on $C$ due to Max Noether's Theorem which states that if $C$ is a non-hyperelliptic curve and $K$ is a canonical divisor of $C$ then the homomorphisms
$$\Sym^l H^0(C, K) \to H^0(C, K^l)$$
are surjective for $l \geq 1$. A straightforward computation using the case $l=2$ of this result shows that a canonical curve $C \subset \PP^{g-1}$ is contained in $(g-2)(g-3)/2$ linearly independent quadrics. For example, a genus $5$ canonical curve $C$ lies on $3$ independent quadrics and the general such $C$ is a complete intersection of these quadrics. However, if $C$ is trigonal the quadrics intersect in a cubic scroll containing $C$. The following Proposition from \cite{ACGH} shows that a similar phenomenon occurs for trigonal curves of higher genus.

\begin{proposition} \label{P7}
If the intersection of the quadrics containing a canonical surface $C$ contains a point $p \notin C$, then $C$ lies on either the Veronese surface (in case g=6) or on a (smooth) rational normal scroll.
\end{proposition}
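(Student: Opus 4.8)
The plan is to carry out the classical Castelnuovo-type analysis on a general hyperplane section of $C$, with the extra point $p$ supplying exactly the one additional point needed to reach the threshold of Castelnuovo's Lemma. Write $\Phi\subseteq\PP^{g-1}$ for the scheme-theoretic base locus of the quadrics through $C$, and assume $p\in\Phi\setminus C$. Fix a general hyperplane $H\cong\PP^{g-2}$ through $p$ and put $\Gamma=C\cap H$; by the general (uniform) position theorem for hyperplane sections of an irreducible non-degenerate curve, the $2g-2$ points of $\Gamma$ span $H$ and lie in linearly general position.

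First I would count the quadrics of $H$ through $\Gamma$. The restriction map $H^0(\II_{C/\PP^{g-1}}(2))\to H^0(\II_{\Gamma/H}(2))$ is injective, because a quadric containing $C$ that vanishes on $H$ is $H$ times a linear form which must then vanish on the non-degenerate curve $C$, forcing it to be $0$; and it is surjective for general $H$, from the multiplication-by-$H$ sequence $0\to\II_C(1)\to\II_C(2)\to\II_{\Gamma/H}(2)\to 0$ together with $H^1(\II_C(1))=0$, the latter holding because the canonical curve is linearly normal. Combined with the count $h^0(\II_{C/\PP^{g-1}}(2))=(g-2)(g-3)/2$ recalled above, this gives $h^0(\II_{\Gamma/H}(2))=(g-2)(g-3)/2$; equivalently $\Gamma$ imposes exactly $2g-3=2(g-2)+1$ conditions on quadrics in $\PP^{g-2}$, the same number a rational normal curve of degree $g-2$ imposes.

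Now $p$ enters. Each of these quadrics is the restriction of a quadric through $C$, hence passes through $p\in\Phi$, so the set $\Gamma\cup\{p\}$ of $2g-1=2(g-2)+3$ points still imposes only $2(g-2)+1$ conditions on quadrics. Once one checks that $\Gamma\cup\{p\}$ is again in linearly general position for general $H$, Castelnuovo's Lemma forces $\Gamma\cup\{p\}$ onto a unique rational normal curve $R_H\subset H$ of degree $g-2$. The quadrics through $R_H$ and those through $\Gamma\cup\{p\}$ have equal dimension, so they coincide, whence $R_H=\Phi\cap H$. Letting $H$ range over the hyperplanes through $p$, this exhibits $\Phi$ as an irreducible, reduced, non-degenerate surface containing $C$ whose general hyperplane section is a rational normal curve; thus $\deg\Phi=g-2$, the minimal degree of a non-degenerate surface in $\PP^{g-1}$. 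By the classification of surfaces of minimal degree, $\Phi$ is then a rational normal scroll, a cone over a rational normal curve, or — only if $g-2=4$, i.e.\ $g=6$ — the Veronese surface; a short separate argument excludes the cone (or it is absorbed as a degenerate scroll), which gives the statement.

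The hardest part is the passage from a single $H$ to all of $\Phi$: one must (i) verify the linear-general-position hypothesis for the augmented set $\Gamma\cup\{p\}$ — the delicate point, since $p$ is fixed while $\Gamma$ varies, so one needs that $p$ avoids the span of any $g-2$ points of a general hyperplane section of $C$ — and (ii) see that the rational normal curves $R_H$ genuinely patch into an irreducible surface rather than degenerating; both are exactly what the uniform position machinery is built for, and it is precisely here that the hypothesis $p\in\Phi\setminus C$ is used, since $\Gamma$ alone has one point too few to apply Castelnuovo's Lemma. A secondary technical point is the exclusion of the cone together with the verification that the resulting scroll is smooth, which requires checking that a smooth canonical curve on such a cone would be forced through the vertex and would then fail to be canonically embedded.
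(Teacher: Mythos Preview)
The paper does not give its own proof of this proposition: it is quoted directly from \cite{ACGH} and used as a black box to produce the scroll $S$. So there is no in-paper argument to compare against. What you have written is a faithful outline of the classical Enriques--Babbage/Petri argument as it appears in ACGH: pass to a general hyperplane section $\Gamma$, use Noether's theorem and linear normality to count quadrics through $\Gamma$, observe that the extra point $p$ brings the configuration $\Gamma\cup\{p\}$ up to the $2(g-2)+3$ threshold of Castelnuovo's Lemma while leaving the number of conditions at $2(g-2)+1$, conclude that $\Gamma\cup\{p\}$ sits on a rational normal curve $R_H$, identify $R_H$ with $\Phi\cap H$, and finally invoke the del Pezzo--Bertini classification of varieties of minimal degree. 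Your identification of the two genuine technical points---linear general position of $\Gamma\cup\{p\}$ with $p$ fixed, and the passage from the hyperplane slices $R_H$ to an irreducible surface---is accurate, and these are exactly the places where ACGH does the real work (via the uniform position lemma and a monodromy argument). One small addendum: your exclusion of the cone case is slightly glib; in fact a smooth trigonal canonical curve \emph{can} lie on a cone over a rational normal curve (this is precisely the case of maximal Maroni invariant), and the paper's later Proposition~\ref{P9} is what lets one assume the scroll is the balanced $\mathbb{F}_0$ or $\mathbb{F}_1$ for \emph{general} $C$. The word ``smooth'' in the proposition as stated is therefore not quite right without that genericity assumption.
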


By the Proposition a trigonal canonical curve of genus $g \geq 5$ lies on a rational normal scroll $S$ of dimension $2$. Since rational normal scrolls are minimal degree varieties and $S \subset \PP^{g-1}$ we must have $\deg(S)=g-2$. Furthermore using geometric Riemann-Roch (page 12 on \cite{ACGH}) we see that the fibers $\psi^{-1}(t)$ of the degree $3$ map $\psi: C \to \PP^1$ all lie on lines in $\PP^{g-1}$. As $t$ varies in $\PP^1$ these lines sweep out the surface $S$, in particular $S$ is a ruled surface over $\PP^1$.

\subsection{The Destabilizing Subbundle}

To end the section we will recall a result from \cite{HL20} which computes the class of $C$ in $S$. Before stating the result we need to briefly discuss the moduli space of trigonal curves. The locus of smooth trigonal curves will be denoted by $\mathfrak{T}_g$ and $\overline{\mathfrak{T}}_g$ will denote its closure in $\overline{\mathcal{M}}_g$. Recall that every ruled surface over a curve is the projectivization of a vector bundle. Since the surface scroll $S$ containing a trigonal canonical curve is a ruled surface of degree $g-2$ in $\PP^{g-1}$ we have
$$S \cong \PP(\O_{\PP^1}(a) \oplus \O_{\PP^1}(b))$$
where $b \geq a \geq 0$ and $a+b=g-2$. The difference $n=b-a$ is called the \textbf{Maroni invariant} of the trigonal curve $C$. By definition a trigonal curve of Maroni invariant $n$ lies on the Hirzebruch surface $\mathbb{F}_n$. 

We can describe the vector bundle $V=\O_{\PP_1}(a)\oplus \O_{\PP^1}(b)$ whose projectivization is $S$. If $C$ is a trigonal curve and $f:C \to \PP^1$ is a degree $3$ map then $f_* \O_C$ is a rank $3$ vector bundle on $\PP^1$. Furthermore there is an injection $\O_{\PP^1} \hookrightarrow f_* \O_C$ and the cokernel of this inclusion is $V$. The following result from section $12$ of \cite{ZSF00} tells us that the general trigonal curve has $n=0$ or $n=1$.

\begin{proposition} \label{P9}
For a general trigonal canonical curve $C$ the vector bundle $V$ is balanced, i.e. the integers $a$ and $b$ are equal or $1$ apart according to $g \mod 2$.
\end{proposition}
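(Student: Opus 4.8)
The plan is to stratify the trigonal locus $\mathfrak{T}_g$ by the Maroni invariant and to show that, among the finitely many admissible values of $n$, only the smallest one gives a stratum of full dimension; by parity that smallest value is $0$ when $g$ is even and $1$ when $g$ is odd. Fix $n \ge 0$ with $n \equiv g \pmod 2$ and $n \le \tfrac{g+2}{3}$, and let $\mathfrak{T}_{g,n} \subset \mathfrak{T}_g$ be the locally closed set of trigonal curves whose scroll is $\mathbb{F}_n = \PP(\O_{\PP^1}\oplus\O_{\PP^1}(n))$. Since $g \ge 5$ the $g^1_3$ on a trigonal curve is unique, so every automorphism of $C$ preserves it; hence the embedding $C \hookrightarrow \mathbb{F}_n$ in which the $g^1_3$ is cut out by the ruling is canonical up to $\Aut(\mathbb{F}_n)$, and the stabilizer of $[C]$ in $\Aut(\mathbb{F}_n)$ is exactly the finite group $\Aut(C)$. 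Writing $E$ for a section of square $-n$ and $F$ for a fibre, adjunction on $\mathbb{F}_n$ forces $[C] = 3E + bF$ with $b = \tfrac12(g + 3n + 2)$; and two smooth members of $|3E+bF|$ are abstractly isomorphic exactly when they differ by an element of $\Aut(\mathbb{F}_n)$, because such an isomorphism respects the unique $g^1_3$ and hence extends to $\mathbb{F}_n$. Consequently $\mathfrak{T}_{g,n}$ is birational to the quotient of an open subset of the projective space $|3E+bF|$ by $\Aut(\mathbb{F}_n)$, so $\dim\mathfrak{T}_{g,n} = \dim|3E+bF| - \dim\Aut(\mathbb{F}_n)$.

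The next step is to compute the two dimensions. Pushing $\O_{\mathbb{F}_n}(3E+bF)$ forward along $\mathbb{F}_n \to \PP^1$ gives $\bigoplus_{j=0}^{3}\O_{\PP^1}(b - jn)$, whose summands all have non-negative degree precisely because $n \le \tfrac{g+2}{3}$; hence $h^0(3E+bF) = \sum_{j=0}^{3}(b - jn + 1) = 4b + 4 - 6n = 2g+8$, so $\dim|3E+bF| = 2g+7$ — and, notably, this is independent of $n$. On the other hand $\dim\Aut(\mathbb{F}_0) = \dim\Aut(\mathbb{F}_1) = 6$ while $\dim\Aut(\mathbb{F}_n) = n+5$ for $n \ge 2$. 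Therefore $\dim\mathfrak{T}_{g,n} = (2g+7) - \dim\Aut(\mathbb{F}_n)$ equals $2g+1$ for $n \in \{0,1\}$ and drops to $2g+2-n \le 2g$ for $n \ge 2$. Since $\mathfrak{T}_g = \bigcup_n \mathfrak{T}_{g,n}$ is irreducible — it is dominated by the irreducible Hurwitz space of simply branched triple covers of $\PP^1$ — its dimension is the maximum of the strata dimensions, namely $2g+1$, attained only at the smallest admissible $n$; by irreducibility that stratum is dense in $\mathfrak{T}_g$. Thus a general trigonal curve has $n = 0$ if $g$ is even and $n = 1$ if $g$ is odd, which is exactly the assertion that $V$ is balanced. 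One must still check that a general member of $|3E+bF|$ on $\mathbb{F}_0$ (resp.\ $\mathbb{F}_1$) really is a smooth connected non-hyperelliptic curve of genus $g$, but this follows from Bertini applied to the basepoint-free (indeed ample) system $|3E+bF|$, together with the genus computation above and the observation that the ruling restricts to a basepoint-free $g^1_3$, forcing the gonality to be exactly $3$.

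The part that needs genuine care is the incidence-theoretic bookkeeping of the first paragraph: that the Maroni stratification is faithfully realized by the quotients $|3E+bF|/\Aut(\mathbb{F}_n)$, with fibres exactly the $\Aut(\mathbb{F}_n)$-orbits. The crucial input is the uniqueness of the $g^1_3$ for $g \ge 5$, which is what makes the scroll $\mathbb{F}_n$ and the embedding $C \subset \mathbb{F}_n$ canonical up to automorphism and pins down $[C]$ via adjunction; after that everything is cohomology of line bundles on Hirzebruch surfaces together with the classical irreducibility of $\mathfrak{T}_g$. As an alternative to this global count one can argue by semicontinuity directly on the rank-two bundle $V = (f_\ast\O_C)/\O_{\PP^1}$ in a family over $\mathfrak{T}_g$: being balanced is the vanishing of $h^1$ of a suitable fixed twist of $V$, hence an open condition, so it is enough to produce a single trigonal curve with $V$ balanced — and a general smooth curve of class $3E+bF$ on $\mathbb{F}_0$ or $\mathbb{F}_1$ is manifestly such a curve.
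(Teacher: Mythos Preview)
The paper does not give its own proof of this proposition; it simply cites section~12 of \cite{ZSF00}. Your argument is the standard dimension count and is correct: the computation of $h^0(\mathbb{F}_n,3E+bF)=2g+8$, the values $\dim\Aut(\mathbb{F}_n)=6$ for $n\le 1$ and $n+5$ for $n\ge 2$, and the identification of $\mathfrak{T}_{g,n}$ with an open set of $|3E+bF|/\Aut(\mathbb{F}_n)$ (via uniqueness of the $g^1_3$ for $g\ge 5$) are all right, and together with irreducibility of $\mathfrak{T}_g$ they pin down the generic stratum. The semicontinuity alternative you sketch at the end is in fact the cleanest route and is worth promoting: since $\deg V=-(g-2)$ is fixed, balancedness of $V$ is equivalent to $h^0(V(\lfloor\tfrac{g-3}{2}\rfloor))=0$, an open condition in families, so exhibiting a single smooth member of $|3E+bF|$ on $\mathbb{F}_0$ or $\mathbb{F}_1$ (which you do via Bertini) already suffices, bypassing the global dimension count entirely.
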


Due to Proposition \ref{P9} when proving Theorem \ref{MT} we may assume that $n=0$ if $g$ is even or $n=1$ if $g$ is odd. The following result from section $3$ of \cite{HL20} computes the class of $C \subset S \cong \mathbb{F}_n$ in terms of the genus and Maroni invariant.

\begin{proposition} \label{P10}
If $C \subset \mathbb{F}_n$ is a trigonal curve with Maroni invariant $n$ then
$$[C]=3E+\left(\frac{g+3n}{2}+1\right)F$$
\end{proposition}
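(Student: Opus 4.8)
The plan is to combine intersection theory on the Hirzebruch surface $S\cong\mathbb{F}_n$ with the adjunction formula. Write $\Pic S=\Z E\oplus\Z F$, where $F$ is the class of a ruling fiber and $E$ is the negative section (consistent with the conventions used in Lemmas \ref{L6} and \ref{L7}); then the intersection pairing is determined by $E^2=-n$, $E\cdot F=1$, $F^2=0$, and the canonical class is $K_S=-2E-(n+2)F$.

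First I would pin down the coefficient of $E$. By the discussion preceding the proposition, the degree-$3$ pencil $\psi : C\to\PP^1$ is exactly the one cut on $C$ by the ruling of $S$, so each fiber $F$ meets $C$ in the divisor $\psi^{-1}(t)$ of degree $3$. Hence $[C]\cdot F=3$, and since for a class $aE+mF$ one has $(aE+mF)\cdot F=a$, we conclude $[C]=3E+mF$ for some integer $m$.

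Next I would determine $m$ by adjunction. Since $C$ is a smooth trigonal curve of genus $g$ sitting in $S$, its arithmetic genus equals $g$, so $2g-2=[C]\cdot([C]+K_S)$. Computing $[C]+K_S=E+(m-n-2)F$ and expanding with the pairing above gives $[C]\cdot([C]+K_S)=4m-6n-6$; setting this equal to $2g-2$ and solving yields $m=\tfrac{g+3n}{2}+1$, which is the asserted class. Note this is an integer precisely because $n\equiv g\pmod 2$, which matches the fact that a trigonal curve of genus $g$ has Maroni invariant of the same parity as $g$.

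This computation is entirely routine, so there is no real obstacle; the only point needing care is the identification of the trigonal pencil with the ruling of $S$ — which guarantees $[C]\cdot F=3$ — and keeping the sign convention $E^2=-n$ consistent throughout. As a sanity check one can verify that with this class $[C]$ has degree $2g-2$ under the scroll embedding of $S$ in $\PP^{g-1}$ and that adjunction returns $p_a(C)=g$, for both parities of $g$.
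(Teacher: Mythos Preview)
Your argument is correct: the identification $[C]\cdot F=3$ from the trigonal pencil together with adjunction on $\mathbb{F}_n$ forces $m=\tfrac{g+3n}{2}+1$, and your parity remark is right since $a+b=g-2$ and $n=b-a$ give $n\equiv g\pmod 2$.

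As for comparison, the paper does not actually prove this proposition; it simply imports the statement from \cite{HL20}. Your self-contained computation via $K_{\mathbb{F}_n}=-2E-(n+2)F$ and the adjunction formula is the standard way to obtain it and is entirely adequate here. The only thing I would tighten is the justification that the $g^1_3$ on $C$ coincides with the pencil cut by the ruling of $S$: in the paper this is asserted geometrically (the fibers of $\psi$ lie on lines in $\PP^{g-1}$, and those lines sweep out $S$), so you may want to cite that passage rather than leave it implicit.
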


Note that since $S \cong \mathbb{F}_n$ we have $E^2=-n=a-b$, $F^2=0$ and $E\cdot F=1$. Now using Proposition \ref{P10} we can compute the degree of $N_{C/S}$.
$$\deg(N_{C/S})=[C]^2=3g+6$$
On the other hand from Proposition \ref{P1} we know the slope of $N_{C/\PP^{g-1}}$.
$$\mu(N_{C/\PP^{g-1}})=2g+4+\frac{6}{g-2}$$
Furthermore when $g \geq 4$ we have
$$(g+2)(g-2) \geq 6$$
which implies that $N_{C/S}$ is a destabilizing line subbundle of $N_{C/\PP^{g-1}}$.

\begin{remark}
The result of Theorem \ref{MT} does not apply if $g=3$ and is already well known in the case $g=4$.
\begin{enumerate}
\item Any canonical curve $X \subset \PP^2$ of genus $3$ is a smooth plane quartic, in particular any such $X$ has gonality $3$. The normal bundle of $X \subset \PP^2$ is the line bundle $\mathscr{O}_C(4)$ which is stable.
\item The normal bundle $N_{C/\PP^3}$ of any canonical curve $C \subset \PP^3$ of genus $4$ is easily computed due to the fact that such a $C$ is a complete intersection of a quadric $Q$ and a cubic $Y$.
$$N_{C/\PP^3} \cong N_{C/Q} \oplus N_{C/Y} = \O_C(2) \oplus \O_C(3)$$
The inclusion $N_{C/S} \subset N_{C/\PP^{3}}$ is given by
$$\O_C(3) \subset \O_C(2) \oplus \O_C(3)$$
which is the Harder-Narasimhan filtration for $N_{C/\PP^{3}}$ since $\deg(\O_C(3))>\deg(\O_C(2))$.
\end{enumerate}
\end{remark}

By the remark we see that it suffices to prove Theorem \ref{MT} in the case when $g \geq 5$. Also from Proposition \ref{P1} and the short exact sequence
\[\begin{tikzcd}
	0 & {N_{C/S}} & {N_{C/\PP^{g-1}}} & {N_{S/\PP^{g-1}} \rvert_C} & 0
	\arrow[from=1-1, to=1-2]
	\arrow[from=1-2, to=1-3]
	\arrow[from=1-3, to=1-4]
	\arrow[from=1-4, to=1-5]
\end{tikzcd}\]
we learn that $N_{S/\PP^{g-1}} \rvert_C$ has degree $2g^2-3g-8$ and rank $g-3$ so that
$$\mu(N_{S/\PP^{g-1}} \rvert_C)=\frac{2g^2-3g-8}{g-3}=2g+3+\frac{1}{g-3}$$
In particular
$$2g^2-3g-8-(2g+3)(g-3)=1$$
which implies $gcd(rk(N_{S/\PP^{g-1}} \rvert_C),deg(N_{S/\PP^{g-1}} \rvert_C))=1$ hence $N_{S/\PP^{g-1}} \rvert_C$ is semi-stable iff it is stable.

\section{Proof of Main Theorem}

The result of Theorem \ref{MT} is equivalent to showing that $N_{S/\PP^{g-1}} \rvert_C$ is semi-stable for a general trigonal canonical curve $C \subset \PP^{g-1}$. The idea is to degenerate $C$ to a union of rational curves $X=C_1 \cup C_2 \cup C_3$ and show that $N_{S/\PP^{g-1}} \rvert_X$ is semi-stable with respect to the adjusted slope. Then we will use Proposition \ref{P5} to conclude that $N_{S/\PP^{g-1}} \rvert_{C}$ is semi-stable for a general $C$.

\subsection{Setup and Notation}
Let $\mathcal{H}$ be the component of $\Hilb_{(2g-2)t+1-g}(\PP^{g-1})$ containing smooth curves in $\PP^{g-1}$ of genus $g$ and degree $2g-2$. Denote by $\mathcal{T}$ the closure of the locus of smooth curves in $\mathcal{H}$ which admit a degree $3$ map to $\PP^1$. We know from Proposition \ref{P9} that there is an open subset of $\mathcal{T}$ on which the Maroni invariant is $n=0$ or $n=1$ depending on $g \mod 2$. Therefore in our proof we may assume that $S \cong \mathbb{F}_0 \cong \PP^1 \times \PP^1$ when $g$ is even and $S \cong \mathbb{F}_1 \cong \Bl_p \PP^2$ when $g$ is odd. The embedding of $S$ in $\PP^{g-1}$ is given by the complete linear series $\lvert E + \lceil\frac{g-2}{2}\rceil F\rvert$ which restricts to the canonical linear series on $C$. By Proposition \ref{P10} the class of $C$ in $S$ is
$$[C]=3E+\left(\frac{g+2}{2}\right)F$$
when $g$ is even and the class is
$$[C]=3E+\left(\frac{g+5}{2}\right)F$$
when $g$ is odd.

Furthermore, given any such $C$ we can find a flat family $T$ over $\PP^1$ that degenerates $C$ to a union $X=C_1 \cup C_2 \cup C_3$ where
$$[C_1]=E+\biggl\lceil\frac{g-4}{2}\biggr\rceil F$$
$$[C_2]=E+2F$$
and $[C_3]=E+2F$ if $g$ is odd or $[C_3]=E+F$ if $g$ is even. If we can show that the bundle $N_{S/\PP^{g-1}} \rvert_X$ is semi-stable with respect to the adjusted slope, then by Proposition \ref{P5} the bundle $N_{S/\PP^{g-1}} \rvert_{X_t}$ is semi-stable for the general member $X_t$ of $T$. This implies the result of Theorem \ref{MT} since each smooth curve $C$ in $\mathcal{T}$ can be deformed to such an $X$.

Let $\mathcal{E}=N_{S/\PP^{g-1}} \rvert_X$ and $\nu:\tilde{X} \to X$ the normalization of $X$. To calculate the adjusted slope of a possible subbundle $\mathcal{F} \subset \nu^* \mathcal{E}$ we first need to compute the restriction $\nu^* \mathcal{E} \rvert_{\tilde{C_i}}$ to the components $\tilde{C_i}$ of the normalization $\tilde{X}$. Observe that $\nu^* \mathcal{E} \rvert_{\tilde{C_i}}= N_{S/\PP^{g-1}} \rvert_{C_i}$ and by Lemmas \ref{L6} and \ref{L7} we have
$$N_{S/\PP^{g-1}} \rvert_{C_1} = N_{C_1/\Lambda_1} \oplus \O_{\PP^1}(g-2)$$
$$N_{S/\PP^{g-1}} \rvert_{C_2} = N_{C_2/\Lambda_2} \oplus \O_{\PP^1}\left(\biggl\lfloor\frac{g+4}{2}\biggr\rfloor \right)^{\oplus \lceil\frac{g-6}{2}\rceil}$$ 
$$N_{S/\PP^{g-1}} \rvert_{C_3} = N_{C_3/\Lambda_3} \oplus \O_{\PP^1}\left(\biggl\lceil\frac{g+2}{2}\biggr\rceil \right)^{\oplus \lfloor\frac{g-4}{2}\rfloor}$$
where the $\Lambda_i$ are linear spaces such that
$$\lambda_1:=\dim(\Lambda_1)=g-3$$
$$\lambda_2:=\dim(\Lambda_2)=\biggl\lfloor \frac{g+2}{2}\biggr\rfloor$$
$$\lambda_3:=\dim(\Lambda_3)=\biggl\lceil \frac{g}{2}\biggr\rceil$$
and for each $i$ we have $C_i$ is a rational normal curve of degree $\lambda_i$ lying in $\Lambda_i$.
Given a rank $r$ subbundle $\mathcal{F} \subset \nu^* \mathcal{E}$, where $1 \leq r \leq g-4$, we have
$$\mathcal{F} \rvert_{\tilde{C_i}} \subset N_{S/\PP^{g-1}} \rvert_{C_i} = N_{C_i/\Lambda_i} \oplus \O_{\PP^1}(\lambda_i+1)^{\oplus g-\lambda_i-2}$$
which implies that $\mathcal{F} \rvert_{\tilde{C_i}}$ splits as a direct sum
$$\mathcal{F} \rvert_{\tilde{C_i}}=\mathcal{H}_i \oplus \mathcal{M}_i$$
where $\mu(\mathcal{M}_i) \leq \lambda_i+1$ and for some integers $a_i$ we have
$$\mathcal{H}_i \cong \O_{\PP^1}(\lambda_i+2)^{\oplus a_i} \subset N_{C_i/\Lambda_i}$$ 
Note that $\mu(N_{S/\PP^{g-1}} \rvert_{X})=2g+3+\frac{1}{g-3}$ so Theorem $\ref{MT}$ will be proven if we can rule out a subbundle $\mathcal{F}$ with $\mu^{\adj}(\mathcal{F}) > 2g+3$, this can be done in four steps.
\begin{enumerate}
\item Provide a bound on $\sum a_i$ which ensures that $\mathcal{F}$ does not destabilize $\nu^*\mathcal{E}$.
\item Show that there is no destabilizing subbundle $\mathcal{F} \subset \nu^*\mathcal{E}$ such that $a_1=r$. This combined with the aforementioned bound on $\sum a_i$ will show that $\nu^* \mathcal{E}$ has no destabilizing line subbundle.
\item Next rule out a rank $2$ destabilizing line subbundle $\mathcal{F}$.
\item Finally rule out a rank $r \geq 3$ subbundle. 
\end{enumerate}

\subsection{Preliminary Degree Bound}

In view of the splittings
$$\mathcal{F} \rvert_{\tilde{C_i}} = \mathcal{H}_i \oplus \mathcal{M}_i \cong \O_{\PP^1}(\lambda_i+2)^{\oplus a_i} \oplus \mathcal{M}_i$$
we get a bound on $\deg(\mathcal{F})$ in terms of the sum of the $a_i$.
$$\deg(\mathcal{F})=\sum_{i=1}^3 \deg(\mathcal{F} \rvert_{\tilde{C_i}})=$$
$$a_1(g-1)+a_2\biggl\lfloor\frac{g+6}{2}\biggr\rfloor+a_3\biggl\lceil \frac{g+4}{2} \biggr\rceil+\sum_{i=1}^3 \deg(\mathcal{M}_i) \leq$$
$$a_1(g-1)+a_2\biggl\lfloor\frac{g+6}{2}\biggr\rfloor+a_3\biggl\lceil \frac{g+4}{2} \biggr\rceil+(r-a_1)(g-2)+(r-a_2)\biggl\lfloor\frac{g+4}{2}\biggr\rfloor+(r-a_3)\biggl\lceil\frac{g+2}{2}\biggr\rceil=$$
$$r(2g+1)+\sum_{i=1}^3 a_i$$
and dividing by $r$ yields
$$\mu(\mathcal{F}) \leq 2g+1+\frac{\sum a_i}{r}$$
If $\sum a_i \leq 2r$ then
$$\mu^{\adj}(\mathcal{F}) \leq \mu(\mathcal{F}) \leq 2g+3$$
as desired. Thus we are reduced to the case when $3r \geq \sum a_i \geq 2r+1$. Recall that
$$\mu^{\adj}(\mathcal{F})=\mu(\mathcal{F})-\frac{\delta_{\mathcal{F}}}{r}
$$
where we let
$$\delta_{\mathcal{F}}=\sum_{x \in \Sing(X)} \codim_{\mathcal{F}}(\mathcal{F}_{x_1} \cap \mathcal{F}_{x_2})$$
and $x_1,x_2$ are the points lying above $x$ in the normalization. The goal is to give suitably large lower bounds for $\delta_{\mathcal{F}}$ in the cases when $\sum a_i \geq 2r+1$. 

\subsection{Ruling out $\bf{a_1=r}$}

Our next goal is to rule out a subbundle $\mathcal{F} \subset \nu^* \mathcal{E}$ such that the following equality holds. 
$$\mathcal{F} \rvert_{\tilde{C_1}} = \mathcal{H}_1 \cong \O_{\PP^1}(\lambda_1+2)^{\oplus r}$$ 
Before doing this we need to introduce some notation and terminology. Given a rational normal curve $R \subset \PP^r$ of degree $r$ fix a vector space $V_R$ of dimension $r-1$. Then a degree $r+2$ rank $k$ subbundle $\mathcal{Q} \subset N_{R/\PP^r}$ is equivalent to giving a map of vector bundles $\O_{R}^{\oplus k} \to \O_{R} \otimes V_R$, i.e. equivalent to specifying a $k$ dimensional subspace $W_{\mathcal{Q}} \subset V_R$. We will refer to $W_{\mathcal{Q}}$ as the \textbf{subspace in} $V_R$ \textbf{corresponding to} $\mathcal{L}$. 

The curve $X=C_1 \cup C_2 \cup C_3$ has three nodal singularities $p_1,p_2,p_3$ at the $3$ intersection points of $C_2,C_3$. Let $p_{i,2}$ and $p_{i,3}$ be the points lying above $p_i$ in the normalization $\tilde{X}$. Note that the points $p_1, p_2, p_3$ span a $\PP^2 \subset \PP^{g-1}$ which is the intersection of the linear spaces $\Lambda_2$ and $\Lambda_3$. For each $j=1,2,3$ the image of the natural map on fibers over $p_j$
$$T_{p_j} \PP^2 \to N_{S/\PP^{g-1},p_j}$$
is exactly the two dimensional intersection of the fibers $N_{C_2/\Lambda_2,p_j}$ and $N_{C_3/\Lambda_3,p_j}$ in $N_{S/\PP^{g-1},p_j}$, we will use $T_j$ to denote this two dimensional intersection.
Set $\kappa_2=\lfloor\frac{g}{2} \rfloor$ and denote by $y_1, \dots, y_{\kappa_2}$ the $\kappa_2$ intersection points of $C_1$ and $C_2$. For each $i$ we have points $y_{i,1} \in \tilde{C_1}$ and $y_{i,2} \in \tilde{C_2}$ lying above $y_i$ in the normalization $\tilde{X}$ of $X$. Similarly we set $\kappa_3=\lceil \frac{g-2}{2}\rceil$ and denote by $z_1, \dots, z_{\kappa_3}$ the intersection points of $C_1$ and $C_3$ and $z_{i,1}, z_{i,3}$ the points in $\tilde{X}$ lying above $z_i$. The points $y_i$ span a linear space $\Gamma$ which is exactly the intersection of $\Lambda_1$ and $\Lambda_2$. For each $i$ the image of the composition of maps
$$T_{\Gamma,y_i} \to T_{\PP^{g-1},y_i} \to N_{C_2/\Lambda_2,y_i}$$
equals the fiber over $y_i$ of the the direct sum of pointing bundles
$$N_{C_2 \to y_1} \oplus \dots \oplus N_{C_2 \to y_{i-1}} \oplus N_{C_2 \to y_{i+1}} \oplus \dots \oplus N_{C_2 \to y_{\kappa_2}}$$
We will denote this fiber by $\Gamma_i$, note that $\Gamma_i$ is the intersection of the fibers  $N_{C_1/\Lambda_1,y_i}$ and $N_{C_2/\Lambda_2,y_i}$ in $N_{S/\PP^{g-1},y_i}$. Lastly, choose vectors $v_{y_1}, \dots v_{y_{\kappa_2}} \in V_{C_2}$ such that $\langle v_{y_i} \rangle$ corresponds to the pointing bundle $N_{C_2 \to y_i}$.

Assume that $a_1=r$, i.e. we have
$$\mathcal{F} \rvert_{\tilde{C_1}} \subset N_{C_1/\Lambda_1}$$
which with our notation is equivalent to
$$\mathcal{F} \rvert_{\tilde{C_1}} = \mathcal{H}_1$$
We claim that the following inequalities hold
\begin{equation} \label{(1)} \sum_{i=1}^{\kappa_2} \codim_{\mathcal{F}}(\mathcal{F}_{y_{i,2}} \cap \mathcal{F}_{y_{i,1}}) \geq a_2
\end{equation}
\begin{equation} \label{(2)}
\sum_{i=1}^{\kappa_3} \codim_{\mathcal{F}}(\mathcal{F}_{z_{i,3}} \cap \mathcal{F}_{z_{i,1}}) \geq a_3
\end{equation}
so $\delta_{\mathcal{F}} \geq a_2+a_3$. We will only show the first inequality (\ref{(1)}) since the same strategy with slightly differing numerics works to prove both. To start note that
$$\dim(\mathcal{H}_{2,y_i} \cap \mathcal{F}_{y_{i,1}}) \leq \dim(\Gamma_i) = \biggl\lfloor\frac{g-2}{2}\biggr\rfloor$$
so that if $\mathcal{H}_2=N_{C_2/\Lambda_2}$ then $\dim(\mathcal{H}_{2,y_i}) > \lfloor \frac{g-2}{2} \rfloor$ which implies that $\mathcal{H}_{2,y_i}$ is not a subspace of $\mathcal{F}_{y_{i,1}}$. In other words
$$\dim(\mathcal{F}_{y_{i,2}} \cap \mathcal{F}_{y_{i,1}}) < r$$
for all $i$ and we conclude
$$\sum_{i=1}^{\kappa_2} \codim_{\mathcal{F}}(\mathcal{F}_{y_{i,2}} \cap \mathcal{F}_{y_{i,1}}) \geq \kappa_2 = a_2$$
We are reduced to the case when $\rk(\mathcal{H}_2)=a_2 \leq \lfloor \frac{g-2}{2} \rfloor$. Given any point $y_i$ with
$$\dim(\mathcal{F}_{y_{i,2}} \cap \mathcal{F}_{y_{i,1}}) = r$$ we would have $W_{\mathcal{H}_2} \subset U_i$ where $W_{\mathcal{H}_2}$ corresponds to $\mathcal{H}_2$ and $U_i$ is the subspace of $V_{C_2}$ spanned by
$$v_{y_1} \cdots v_{y_{i-1}}, v_{y_{i+1}}, \cdots, v_{y_{\kappa_2}}$$
Given any integers
$$1 \leq i_1 < i_2 < \cdots < i_k \leq \kappa_2$$
we have
$$\dim(U_{i_1} \cap \dots \cap U_{i_k})=\biggl\lfloor\frac{g-2}{2} \biggr\rfloor-k$$
by Lemma \ref{L5}. Hence it follows that $W_{\mathcal{H}_2} \subset U_i$ for at most $\lfloor \frac{g-2}{2} \rfloor-a_2$ of the points $y_i$. Thus there are at least $a_2+1$ points $y_j$ such that
$$\dim(\mathcal{F}_{y_{j,1}} \cap \mathcal{F}_{y_{j,2}}) < r$$
which implies that 
$$\sum_{i=1}^{\kappa} \codim_{\mathcal{F}}(\mathcal{F}_{y_{i,2}} \cap \mathcal{F}_{y_{i,1}}) \geq a_2+1$$ 
Putting this all together we conclude that inequality (\ref{(1)}) holds.

By summing the inequalities (\ref{(1)}) and (\ref{(2)}) we get $\delta_{\mathcal{F}} \geq a_2 + a_3$ which combined with the assumption $a_1=r$ and our previous bound on the adjusted slope gives
$$\mu^{\adj}(\mathcal{F}) \leq 2g+2 + \frac{a_2+a_3-\delta_{\mathcal{F}}}{r} \leq 2g+2 < 2g+3$$
as desired. From now on we can assume that $a_1 < r$ which on its own implies that $\nu^* \mathcal{E}$ has no destabilizing line subbundle.

\subsection{Rank 2}
Next we will rule out a destabilizing subbundle $\mathcal{F} \subset \nu^* \mathcal{E}$ of rank $r=2$. When $r=2$ we get the inequality $6 \geq \sum a_i \geq 5$. Since we can assume $a_1 < 2$ we must have $\sum a_i = 5$ and $a_2=a_3=2$. In other words we have
$$\mathcal{F} \rvert_{\tilde{C_2}} = \mathcal{H}_2 \subset N_{C_2/\Lambda_2}$$
$$\mathcal{F} \rvert_{\tilde{C_3}} = \mathcal{H}_3 \subset N_{C_3/\Lambda_3}$$
Suppose $\langle w_1,w_2 \rangle$ is the subspace of $V_{C_2}$ associated to $\mathcal{F} \rvert_{\tilde{C_2}}$. For each $i$ we let $\langle v_{p_i} \rangle$ be the subspace of $V_{C_2}$ associated to the pointing bundle $N_{C_2 \to p_i}$. The $v_{p_i}$ are linearly independent by Lemma \ref{L5} so that 
$$\langle v_{p_1}, v_{p_2} \rangle \cap \langle v_{p_1}, v_{p_3} \rangle \cap \langle v_{p_2}, v_{p_3} \rangle = (0)$$
hence for $j=1,2$ at least one of these subspaces does not contain $w_j$.

Suppose this subspace is the same for both $j=1,2$, i.e. WLOG we have
$$\langle w_1, w_2 \rangle \cap \langle v_{p_2}, v_{p_3} \rangle = (0)$$
Then $\mathcal{F}_{p_{1,2}} \cap T_1 = (0)$ which implies $\mathcal{F}_{p_{1,2}} \cap \mathcal{F}_{p_{1,3}}=(0)$, i.e. $\delta_{\mathcal{F}} \geq 2$ and
$$\mu^{\adj}(\mathcal{F}) \leq 4g+6$$ 
as desired. If we instead have WLOG that $w_1 \notin \langle v_{p_2}, v_{p_3} \rangle$ and $w_2 \notin \langle v_{p_1}, v_{p_3} \rangle$ then we get
$$\dim(\mathcal{F}_{p_{1,2}} \cap T_1) \leq 1$$
$$\dim(\mathcal{F}_{p_{2,2}} \cap T_2) \leq 1$$
which together give $\delta_{\mathcal{F}} \geq 2$ and
$$\mu^{\adj}(\mathcal{F}) \leq 4g+6$$

\subsection{Rank greater than or equal to 3}
It remains to rule out a destabilizing subbundle $\mathcal{F} \subset \nu^* \mathcal{E}$ with rank $r \geq 3$ and $a_1 < r$.
Recall that for each component $\tilde{C_i}$ of the normalization $\tilde{X}$ we have
$$\mathcal{F} \rvert_{\tilde{C_i}} \cong \mathcal{H}_i \oplus \mathcal{M}_i$$
where $\mu(\mathcal{M}_i) \leq \lambda_i+1$.
Let $b_i=\rk(\mathcal{M}_i)$ and assume that $b_2+b_3 \leq r-3$ so that $2+b_2+b_3 < r$. For the points of intersection $p_1,p_2,p_3$ of $C_2$ and $C_3$ we have
$$\dim(\mathcal{F}_{p_{i,2}} \cap \mathcal{F}_{p_{i,3}}) \leq 2+b_2+b_3$$
which implies that
$$\delta_{\mathcal{F}} \geq \sum_{i=1}^3 \codim_{\mathcal{F}}(\mathcal{F}_{p_{i,2}} \cap \mathcal{F}_{p_{i,3}}) \geq 3(r-b_2-b_3-2)=3r-3b_2-3b_3-6$$
thus we compute
$$r (\mu^{\adj}(\mathcal{F})) \leq r(2g+1)+\left(\sum a_i\right) - \delta_{\mathcal{F}} \leq$$
$$r(2g+1)+\left(\sum a_i\right) -3r+3b_2+3b_3 + 6 =$$
$$r(2g+1)+ a_1 + 2(b_2+b_3)+6 - r \leq r(2g+3) - 1 < r(2g+3)$$
as desired. We are reduced to the case when $b_2+b_3 \geq r-2$ which implies that $a_2+a_3 \leq r+2$. But then since $\sum a_i \geq 2r+1$ we must have $a_1 \geq r-1$ i.e. $a_1=r-1$ since we ruled out $a_1=r$ above. If $a_1=r-1$ then $a_2+a_3=r+2$ and $\sum a_i=2r+1$. Notice that $\sum a_i=2r+1$ implies
$$\mu^{\adj}(\mathcal{F}) \leq 2g+3+\frac{1}{r}$$
so we are reduced to the case when $r \mu(\mathcal{F})=r(2g+3)+1$. To finish the proof we need $\delta_{\mathcal{F}} \geq 1$ and for this it suffices to show there exists a singular point $x \in X$ with
$$\mathcal{F}_{x_1} \neq \mathcal{F}_{x_2}$$
where $x_1$ and $x_2$ are the points in the normalization $\tilde{X}$ lying above $x$. In particular we can assume that $\mathcal{F}_{x_1}=\mathcal{F}_{x_2}$ for all $x \in \Sing(X)$ and argue towards a contradiction. To rule out the case when $a_2=r$ (and by a similar argument rule out $a_3=r$) assume that $\mathcal{F} \rvert_{\tilde{C_2}} \cong \mathcal{H}_2 \subset N_{C_2/\Lambda_2}$.
Since $a_2+a_3=r+2$ we have $a_3=\rk(\mathcal{H}_3)=2$ and the assumption
$$\mathcal{F}_{p_{i,2}}=\mathcal{F}_{p_{i,3}}$$
implies that
$$\mathcal{H}_{3,p_i} \subset N_{C_2/\Lambda_2,p_i} \cap N_{C_3/\Lambda_3,p_i}$$
for all $i$. But this implies that for any distinct $i,j \in \{1,2,3\}$
$$W_{\mathcal{H}_3} = \langle v_{p_i},v_{p_j} \rangle$$
this is a contradiction because in particular
$$\langle v_{p_1},v_{p_2} \rangle \neq \langle v_{p_1},v_{p_3} \rangle$$
Therefore we may add $a_2 < r$ and $a_3 < r$ to our list of assumptions. This combined with our other reductions, in particular $a_2+a_3=r+2$, already rules at the case $r=3$. Thus we can also assume $r \geq 4$ and since $a_2 < r$, $a_3 < r$, $a_2+a_3=r+2$ it follows that $a_2 \geq 3$ and $a_3 \geq 3$. Our assumption that
$$\mathcal{F}_{p_{i,2}}=\mathcal{F}_{p_{i,3}}$$
for all $i$ implies that
$$T_i = N_{C_1/\Lambda_1,p_i} \cap N_{C_2/\Lambda_2,p_i} \subset \mathcal{H}_{2,p_i}$$
for all $i$. This is because otherwise we would have
$$\dim(\mathcal{H}_{2,p_i} \cap \mathcal{H}_{3,p_i}) < 2$$
so that
$$\dim(\mathcal{F}_{p_{i,2}} \cap \mathcal{F}_{p_{i,3}}) < 2+b_2+b_3=r$$
It follows that if $v_{p_1},v_{p_2},v_{p_3}$ are vectors in $V_{C_2}$ spanning the subspaces corresponding to $N_{C_2 \to p_1},N_{C_2 \to p_2},N_{C_2 \to p_3}$ and $W_{\mathcal{H}_2} \subset V_{C_2}$ is the subspace corresponding to $\mathcal{H}_2$ then 
$$\langle v_{p_i},v_{p_j} \rangle \subset W_{\mathcal{H}_2}$$
for each $i,j \in \{1,2,3\}$. In particular we see that $W_{\mathcal{H}_2}$ contains the $3$-dimensional subspace $\langle v_{p_1},v_{p_2},v_{p_3} \rangle$ which implies that
$$\mathcal{H}_2 \cong N_{C_2 \to p_1} \oplus N_{C_2 \to p_2} \oplus N_{C_2 \to p_3} \oplus \O_{\PP^1}\left(\biggl\lfloor \frac{g+6}{2} \biggr\rfloor\right)^{\oplus a_2-3}$$
An analogous argument shows that we may also assume
$$\mathcal{H}_3 \cong N_{C_3 \to p_1} \oplus N_{C_3 \to p_2} \oplus N_{C_3 \to p_3} \oplus \O_{\PP^1}\left(\biggl\lceil\frac{g+4}{2}\biggr\rceil\right)^{\oplus a_3-3}$$

For each $i$ let $w_{y_i}$ be a vector in $V_{C_1}$ which spans the subspace corresponding the pointing bundle $N_{C_1 \to y_i}$. We claim that the vector
$$l_i:= w_{y_1} + \dots + w_{y_{i-1}} + \hat{w_{y_i}} + w_{y_{i+1}} + \dots + w_{y_{\kappa_2}}$$ 
is contained in the subspace $W_{\mathcal{H}_1}$ corresponding to $\mathcal{H}_1$.
Due to the assumption
$$\mathcal{F}_{y_{i,1}}=\mathcal{F}_{y_{i,2}}$$
we must have
$$\dim(\mathcal{H}_{1,y_i} \cap (N_{C_2 \to p_1,y_i} \oplus N_{C_2 \to p_2,y_i} \oplus N_{C_2 \to p_3,y_i})) \geq 2$$
since otherwise
$$\dim(\mathcal{F}_{y_{i,1}} \cap \mathcal{F}_{y_{i,2}}) \leq \dim(\mathcal{H}_{1,y_i} \cap (N_{C_2 \to p_1,y_i} \oplus N_{C_2 \to p_2,y_i} \oplus N_{C_2 \to p_3,y_i}))+(r-2) < r$$
On the other hand
$$\mathcal{H}_{1,y_i} \cap (N_{C_2 \to p_1,y_i} \oplus N_{C_2 \to p_2,y_i} \oplus N_{C_2 \to p_3,y_i}) \subseteq N_{C_1/\Lambda_1,y_i} \cap (N_{C_2 \to p_1,y_i} \oplus N_{C_2 \to p_2,y_i} \oplus N_{C_2 \to p_3,y_i})$$
and this latter subspace is $2$ dimensional, so we conclude that the above inclusion of subspaces is actually an equality. The linear space spanned by the points $\{p_1,p_2,p_3\}$ intersects the linear space spanned by $\{y_1, \dots, y_{i-1},y_{i+1},y_{\kappa_2}\}$ in a point $q_i$, denote by $L_i$ the line $\overline{y_i,q_i}$. Observe that if $\Delta_i$ is the image of $T_{y_i} L_i$ in $N_{C_2/\Lambda_2,y_i}$ then
$$\Delta_i \subset N_{C_1/\Lambda_1,y_i} \cap (N_{C_2 \to p_1,y_i} \oplus N_{C_2 \to p_2,y_i} \oplus N_{C_2 \to p_3,y_i})$$
so that from the above we must have $\Delta_i \subset \mathcal{H}_{1,y_i}$. However we also have $\Delta_i=\mathcal{L}_{i,y_i}$ where $\mathcal{L}_i$ is the degree $g-1$ line subbundle of $N_{C_1/\Lambda_1}$ such that $l_i$ spans the subspace corresponding to $\mathcal{L}_i$, this implies that $l_i \in W_{\mathcal{H}_1}$ as claimed. 
Since the matrix with zeros on the diagonal and ones everywhere else is invertible it follows that the $l_i$ are independent and thus are a basis for
$$\langle w_{y_1}, \dots, w_{y_{\kappa_2}} \rangle$$
We conclude that
$$N_{C_1 \to y_1} \oplus \dots \oplus N_{C_1 \to y_{\kappa_2}} \subset \mathcal{H}_1$$
An analogous argument with $C_3$ in place of $C_2$ and the $z_i$ in place of the $y_i$ gives
$$N_{C_1 \to z_1} \oplus \dots \oplus N_{C_1 \to z_{\kappa_3}} \subset \mathcal{H}_1$$
But by Lemma \ref{L5} the bundles $N_{C_1 \to y_i}$ and $N_{C_1 \to z_j}$ span all of $N_{C_1/\Lambda_1}$ because $y_1,\dots, y_{\kappa},z_1,\dots,z_{\kappa}$ are $g-1$ distinct points of $C_1$. This is the desired contradiction because $\mathcal{H}_1$ has rank $r-1 < g-3$.

To summarize we have previously shown that the only possible destabilizing subbundles $\mathcal{F} \subset \nu^* \mathcal{E}$ have rank $r \geq 4$ and $\mu(\mathcal{F})=2g+3+1/r$. The above contradiction shows that given such a subbundle $\mathcal{F}$ there must exist a singular point $x \in X$ with
$$\mathcal{F}_{x_1} \neq \mathcal{F}_{x_2}$$
where $x_1,x_2$ lie above $x$ in the normalization. Thus $\mu^{\adj}(\mathcal{F}) \leq 2g+3$ and this rules out the remaining possibilities for a destabilizing subbundle, i.e. $\nu^* \mathcal{E}$ is semi-stable with respect to the adjusted slope and this finishes the proof of Theorem \ref{MT}.

\section{Tetragonal Curves}

\subsection{Introduction}

The goal of this section is to discuss the Harder-Narasimhan filtration for a tetragonal canonical curve. In this case the geometric Riemann-Roch Theorem implies that a tetragonal curve lies on a $3$-fold scroll $Q$ in $\PP^{g-1}$ and in \cite{AFO16} the authors showed that $N_{C/Q}$ is a destabilizing subbundle of $N_{C/\PP^{g-1}}$. Therefore we can ask what role the subbundle $N_{C/Q}$ plays in the Harder-Narasimhan filtration of $N_{C/\PP^{g-1}}$. We will focus almost entirely on the genus $6$ case, the outline of this section is as follows:

\begin{enumerate}
\item Introduce some background and state our main theorem which computes the HN-filtration of a general genus $6$ canonical curve.
\item Prove the main theorem by degenerating to a union of elliptic normal curves.
\item Show that $N_{Q/\PP^{5}} \rvert_C$ is semi-stable for a general genus $6$ tetragonal curve, which in particular implies that $N_{Q/\PP^{5}}$ is semi-stable. Furthermore we get that
$$N_{C/S} \subset N_{C/Q} \subset N_{C/\PP^{5}}$$
is a filtration of $N_{C/\PP^{5}}$ by semi-stable bundles. However, this filtration is not the HN-filtration because it does not satisfy the decreasing slope condition of Theorem \ref{T1}.
\end{enumerate}

For the general curve we can show that $N_{C/\PP^5}$ is unstable by using the fact that $C$ lies on a del Pezzo surface $S$. Showing the existence of such a surface $S$ starts with observing that every genus $6$ curve possesses a $g^2_6$. Indeed we compute
$$\rho(6,2,6)=6-3(6-6+2)=0$$
so that by the Brill-Noether existence Theorem $W^2_6(C) \neq \emptyset$. Alternatively in Chapter $5$ of \cite{ACGH} the authors use ad hoc methods to show $W^1_4(C) \neq \emptyset$. Then $W^2_6(C)$ is also nonempty because on a genus $6$ curve the residual of a $g^1_4$ is a $g^2_6$. Furthermore the exercises in Chapter $6$ of \cite{ACGH} show that a $g^2_6$ on a general genus $6$ curve maps $C$ birationally to a sextic plane curve with $4$ nodes. If we blowup $\PP^2$ at the four nodes we obtain a del Pezzo surface $S$ containing $C$. We will prove the following Theorem which computes the HN-filtration of $N_{C/\PP^5}$.

\begin{theorem}
Let $C$ be a general canonical curve of genus $6$. If $S \subset \PP^5$ is the del Pezzo surface containing $C$ then
$$0 \subset N_{C/S} \subset N_{C/\PP^5}$$
is the Harder-Narasimhan filtration of $N_{C/\PP^5}$.
\end{theorem}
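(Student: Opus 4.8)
The plan is to reduce the theorem to the semi-stability of $N_{S/\PP^5}\rvert_C$, and then to prove that by degenerating $C$ to a union of elliptic normal curves lying on $S$, exactly as in the proof of Theorem \ref{MT}.

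First I would record the numerics. Here $S$ is the quintic del Pezzo surface $\Bl_{p_1,\dots,p_4}\PP^2$ embedded by $\lvert -K_S\rvert$, the class of $C$ is $6H-2\sum E_i=-2K_S$, and so $N_{C/S}$ is the line bundle of degree $C^2=20$. By Proposition \ref{P1}, $\mu(N_{C/\PP^5})=\tfrac{35}{2}$, and the conormal sequence $0\to N_{C/S}\to N_{C/\PP^5}\to N_{S/\PP^5}\rvert_C\to 0$ shows that $N_{S/\PP^5}\rvert_C$ has rank $3$, degree $50$, and slope $\tfrac{50}{3}$. Since $\gcd(3,50)=1$ this quotient is semi-stable if and only if it is stable, while $N_{C/S}$ is automatically semi-stable. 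Granting that $N_{S/\PP^5}\rvert_C$ is semi-stable, a short diagram chase — decomposing a subsheaf of $N_{C/\PP^5}$ via its intersection with $N_{C/S}$ and its image in $N_{S/\PP^5}\rvert_C$, and using $\tfrac{50}{3}<20$ — shows that the maximal destabilizing slope of $N_{C/\PP^5}$ equals $20$ and is attained only by $N_{C/S}$, so uniqueness of the Harder--Narasimhan filtration forces it to be $0\subset N_{C/S}\subset N_{C/\PP^5}$. Thus it remains to prove that $N_{S/\PP^5}\rvert_C$ is semi-stable for a general genus $6$ canonical curve.

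Next I would degenerate. Since $C\sim -2K_S$ and $\lvert -K_S\rvert$ is base-point-free of projective dimension $5$, a general $C\in\lvert -2K_S\rvert$ can be deformed inside this linear system to $X=D_1\cup D_2$ with $D_1,D_2\in\lvert -K_S\rvert$ general smooth elliptic curves meeting transversely at $(-K_S)^2=5$ points; then $X$ is a connected nodal curve of arithmetic genus $6$ and degree $10$. As recalled in the introduction a general genus $6$ curve arises on $S$ in the class $-2K_S$, and the family of curves in $\lvert -2K_S\rvert$ dominates $\mathcal M_6$ (both have dimension $15$), so such an $X$ really is a degeneration of a general canonical curve of genus $6$. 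By Proposition \ref{P5} it then suffices to show that $\mathcal E\rvert_X$ is semi-stable with respect to the adjusted slope, where $\mathcal E=N_{S/\PP^5}$ and $\nu:\tilde X=D_1\sqcup D_2\to X$ is the normalization. From $0\to\O_S\to\O_S(-K_S)\to\O_{D_i}(-K_S)\to 0$ one computes $h^0(\O_{D_i}(1))=5$, so each $D_i$ is an elliptic normal curve of degree $5$ spanning a hyperplane $\Pi_i\cong\PP^4$; since $S\cap\Pi_i$ is a divisor in $\lvert -K_S\rvert$ containing $D_i\in\lvert -K_S\rvert$ we get $S\cap\Pi_i=D_i$, and for general $D_i$ this intersection is transverse, so Lemma \ref{L2} gives $\mathcal E\rvert_{D_i}\cong N_{D_i/\PP^4}$.

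The hard part will be the crucial input that the normal bundle of an elliptic normal quintic is semi-stable: $N_{D_i/\PP^4}$ has rank $3$, degree $25$, and slope $\tfrac{25}{3}$, and I would establish this either by invoking the known semi-stability of normal bundles of elliptic normal curves, or by a further degeneration of $D_i$ to a cycle of lower-degree elliptic curves combined with a component-wise bound of the type below. Granting it, if $\mathcal F\subseteq\nu^*\mathcal E$ is a subbundle of rank $r$, then each $\mathcal F\rvert_{D_i}$ is a subsheaf of $N_{D_i/\PP^4}$, so after saturating one has $\deg(\mathcal F\rvert_{D_i})\le\tfrac{25}{3}r$; summing over $i=1,2$ gives $\deg\mathcal F\le\tfrac{50}{3}r=\mu(\mathcal E\rvert_X)\cdot r$, and hence $\mu^{\adj}(\mathcal F)\le\mu(\mathcal F)\le\mu(\mathcal E\rvert_X)$. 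Therefore $\mathcal E\rvert_X$ is semi-stable and Proposition \ref{P5} completes the proof. Apart from the semi-stability of the normal bundle of the elliptic normal quintic, everything is numerical bookkeeping together with standard facts about the quintic del Pezzo surface and its anticanonical pencil.
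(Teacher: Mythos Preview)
Your proposal is correct and follows essentially the same approach as the paper: degenerate $C\in\lvert -2K_S\rvert$ to a union of two anticanonical elliptic curves, identify the restriction of $N_{S/\PP^5}$ to each component with the normal bundle of an elliptic normal quintic via Lemma~\ref{L2}, and conclude by semi-stability of that normal bundle together with Propositions~\ref{P5} and~\ref{P6}. The one input you flag as ``the hard part'' is exactly what the paper obtains by citing Ein--Lazarsfeld \cite{EL92}, so no further degeneration is needed there.
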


Since $C$ is birational to a plane sextic with $4$ nodes we can compute the class of $C$ in $S$.
$$[C]=6H-2E_1-2E_2-2E_3-2E_4$$
By adjunction the canonical divisor of $C$ is the restriction of $D=3H-\sum E_i$ to $C$. The complete linear series $\lvert D \rvert$ embeds $S$ into $\PP^5$ and this embedding restricts to the canonical embedding on $C$. Observe that $\mu(N_{C/S})=[C]^2=20$ while
from the preliminary section we know that $\mu(N_{C/\PP^5})=35/2$ so that $N_{C/S}$ destabilizes $N_{C/\PP^5}$. We will use the theory of the adjusted slope from \cite{CLV22} to compute the HN-filtration by degenerating to a union of elliptic normal curves. The key that allows us to do this is a corollary of a result of Ein and Lazarsfeld \cite{EL92} which says that if $X \subset \PP^{d}$ is an elliptic normal curve then $N_{X/\PP^d}$ is semi-stable.

\subsection{Proof of the Theorem}

We need to show that $N_{S/\PP^5} \rvert_C$ is semi-stable for a general curve of genus $6$. Recall that $C$ has class $6H-2\sum E_i$ on the del Pezzo surface $S$. By Proposition \ref{P5} it suffices to show that $N_{S/\PP^5} \rvert_{X_1 \cup X_2}$ is semi-stable with respect to the adjusted slope where $X_1$ and $X_2$ both have class $3H-\sum E_i$. In other words the $X_j$ are the strict transform of cubics in $\PP^2$ passing through $p_1, \dots, p_4$ and as such they have genus $1$. Since $S$ is embedded in $\PP^5$ via the complete linear series $\lvert 3H-\sum E_i \rvert$ it follows that $X_1$ and $X_2$ are mapped into $\PP^5$ as hyperplane sections of $S$. If $X_j = \Lambda_j \cap S$ for $j=1,2$ where $\Lambda_j \cong \PP^4$ then by Lemma \ref{L2} we have
$$N_{S/\PP^5} \rvert_{X_j} \cong N_{X_j/\Lambda_j}$$
But $X_j \subset \Lambda_j$ is an elliptic normal curve so that $N_{X_j/\Lambda_j}$ is semi-stable by \cite{EL92}. Thus using Proposition \ref{P6} we conclude that $N_{S/\PP^5} \rvert_{X_1 \cup X_2}$ is semi-stable as desired.

\subsection{More on curves of genus 6}

Given a tetragonal genus $6$ canonical curve there is another filtration of $N_{C/\PP^5}$ by semi-stable bundles which involves $N_{C/Q}$. This is a three step filtration but it is not the HN-filtration since it does not satisfy the non-increasing slope condition required by the Harder-Narasimhan filtration. Recall that $C$ lies on a $3$-fold scroll $$Q=\PP(\O_{\PP^1}(1) \oplus \O_{\PP^1}(1) \oplus \O_{\PP^1}(1)) \cong \PP^1 \times \PP^2$$
and that the fibers of a $g^1_4$ on $C$ are given by lines passing through a single node $p_i$ or the conics passing through all four nodes $p_1, \dots, p_4$. Using this fact one can show that the del Pezzo surface $S$ containing $C$ is contained in the scroll $Q$. Thus there is a chain of inclusions
$$0 \subset N_{C/S} \subset N_{C/Q} \subset N_{C/\PP^5}$$
the claim is that for a general tetragonal canonical curve of genus $6$ this gives a filtration of $N_{C/\PP^5}$ by semi-stable bundles. Note that $N_{C/S}$ and $N_{S/Q} \rvert_C$ are lines bundles so both are semi-stable. Therefore the above will give a filtration if $N_{Q/\PP^5} \rvert_C$ is semi-stable for a general curve $C$. In order to show this we will need the following Lemma.

\begin{lemma}
If $Q \subset \PP^5$ is the image of the Segre embedding then for each line $L=\PP^1 \times \{p\}$ on $Q \cong \PP^1 \times \PP^2$ there is a quadric $Y_p$ in $\PP^5$ containing $Q$ whose singular locus is $L$. Furthermore if $\mathscr{I}_Q$ is the ideal sheaf of $Q$ then $$\PP(H^0(\mathscr{I}_Q(2)))=\{Y_p \mid p \in \PP^2\}$$.
\end{lemma}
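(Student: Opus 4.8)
The plan is to make the Segre embedding completely explicit and then carry out a short linear-algebra computation. Write a point of $Q \cong \PP^1 \times \PP^2$ as $\bigl([s_0:s_1],[t_0:t_1:t_2]\bigr)$ and send it to the rank one matrix $Z=(z_{ij})$ with $z_{ij}=s_i t_j$ for $0 \le i \le 1$, $0 \le j \le 2$, so that $Q \subset \PP^5$ is cut out as the rank $\le 1$ locus of the generic $2 \times 3$ matrix $Z$. The three $2 \times 2$ minors
\[
M_0 = z_{01}z_{12}-z_{02}z_{11},\qquad M_1 = z_{00}z_{12}-z_{02}z_{10},\qquad M_2 = z_{00}z_{11}-z_{01}z_{10}
\]
vanish on $Q$ and are visibly linearly independent. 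Since the Segre threefold is projectively normal the restriction $H^0(\PP^5,\OO(2)) \to H^0(Q,\OO_Q(2))$ is surjective, and comparing $h^0(\OO_{\PP^5}(2))=21$ with $h^0\bigl(\OO_{\PP^1}(2) \boxtimes \OO_{\PP^2}(2)\bigr)=18$ gives $h^0(\mathscr{I}_Q(2))=3$. Hence $\PP(H^0(\mathscr{I}_Q(2)))$ is the $\PP^2$ spanned by $M_0,M_1,M_2$, and for $p=[p_0:p_1:p_2] \in \PP^2$ the quadric
\[
Y_p = \Bigl\{\, Z : \det\!\begin{pmatrix} p_0 & p_1 & p_2 \\ z_{00} & z_{01} & z_{02} \\ z_{10} & z_{11} & z_{12} \end{pmatrix}=0 \,\Bigr\} = \{\, p_0 M_0 - p_1 M_1 + p_2 M_2 = 0 \,\}
\]
is a nonzero element of this linear system; since the $M_k$ are independent, the assignment $p \mapsto Y_p$ identifies $\PP^2$ with $\PP(H^0(\mathscr{I}_Q(2)))$. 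This gives the second assertion.

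To compute $\Sing(Y_p)$ I would polarize the quadratic form $q_p(Z)=\det(p;\,r_0;\,r_1)$, where $r_0=(z_{00},z_{01},z_{02})$ and $r_1=(z_{10},z_{11},z_{12})$ are the rows of $Z$. As $q_p$ is separately linear in $r_0$ and in $r_1$, its polar bilinear form sends $(Z,Z')$ to $\det(p;\,r_0;\,r_1')+\det(p;\,r_0';\,r_1)$, so $Z \in \Sing(Y_p)$ exactly when $\det(p;\,r_0;\,v)=0$ and $\det(p;\,v;\,r_1)=0$ for all $v \in k^3$; the first forces $r_0 \in \langle p \rangle$ and the second forces $r_1 \in \langle p \rangle$. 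Therefore $\Sing(Y_p)$ is the set of matrices whose two rows are $s_0 p$ and $s_1 p$ for $[s_0:s_1] \in \PP^1$, which is precisely the line $L_p = \PP^1 \times \{p\}$ on $Q$. As a sanity check, for $p=[1:0:0]$ one has $q_p = z_{01}z_{12}-z_{02}z_{11}$, a rank $4$ form, whose vertex in $\PP^5$ is a $\PP^{5-4}=\PP^1$.

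All of the above is routine once the set-up is in place, so I do not anticipate a real obstacle. The one ingredient I would invoke rather than verify is that $Q$ lies on exactly a $\PP^2$ of quadrics, i.e.\ the surjectivity of $H^0(\PP^5,\OO(2)) \to H^0(Q,\OO_Q(2))$, which is the classical projective normality of the Segre variety $\PP^1 \times \PP^2 \subset \PP^5$.
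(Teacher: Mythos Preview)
Your argument is correct and complete. The route differs from the paper's: the paper writes down one specific minor $z_0z_4-z_1z_3$, checks by hand that its singular locus is a line on $Q$, and then uses the transitive action of $\Aut(Q)\subset\PGL(6)$ on points of $\PP^2$ to transport this quadric to a $Y_p$ for every $p$. You instead package all the quadrics at once as the $3\times 3$ determinant with $p$ inserted as the top row and read off $\Sing(Y_p)$ uniformly via the polar bilinear form. Your approach gives the explicit bijection $p\mapsto Y_p$ for free and avoids the (easy but unstated) fact that automorphisms of $Q$ extend to $\PP^5$; the paper's symmetry argument, on the other hand, spares one the polarization computation. For $h^0(\mathscr{I}_Q(2))=3$ the paper simply quotes that the three minors generate the ideal, whereas you deduce it from projective normality and a dimension count---both are standard and acceptable.
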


\begin{proof}
We will realize the Segre threefold as the image of the embedding $\PP^1 \times \PP^2 \to \PP^5$ given by
$$([s:t],[x:y:z]) \mapsto [sx:sy:sz:tx:ty:tz]$$
Choose coordinates $z_i$ on $\PP^5$, then the ideal of the Segre threefold is generated by the equations
$$z_0z_4-z_1z_3=0$$
$$z_0z_5-z_2z_3=0$$
$$z_1z_5-z_2z_4=0$$
In particular the equation of any quadric containing $Q$ is a linear combination of the above equations, i.e. $H^0(\mathscr{I}_Q(2))=3$. The point $q_0=([1:0],[1:0:0]) \subset \PP^1 \times \PP^2 \subset \PP^5$ is contained in the line
$$L: z_0=z_1=z_3=z_4=0$$
and $L$ is the singular locus of the quadric
$$Y:z_0z_4-z_1z_3=0$$
If $q_1=(p_1,p_2)$ is any point of $\PP^1 \times \PP^2 \subset \PP^5$ then we can find an element $g \in \PGL(6,\C)$
such that $g(q_0)=q_1$ and $g$ fixes $Q$. Then the image $Y_{p_2}=g(Y)$ of $Y$ under $g$ is a quadric containing $Q$ which is singular along the line $\PP^1 \times \{p_2\} \subset \PP^1 \times \PP^2$. Thus we get a $2$-dimensional family of quadrics $\{Y_{p}\}_{p \in \PP^2}$ containing $Q$. But this must give all quadrics containing $Q$ since we already know the space of quadrics containing $Q$ is $2$-dimensional.
\end{proof}

Let $\alpha$ and $\beta$ be the pullbacks of hyperplane classes on $\PP^1$ and $\PP^2$ respectively. Then $Q \cong \PP^1 \times \PP^2$ is embedded in $\PP^5$ via $\lvert \alpha + \beta \rvert$. Since $C$ is tetragonal of degree $10$ in $\PP^5$ its class in $Q$ is
$$[C]=6\alpha\beta+4\beta^2$$
In particular if $\pi:\PP^1 \times \PP^2 \to \PP^2$ is the second projection then $\pi \rvert_C$ maps $C$ to a degree $6$ curve in $\PP^2$. Note that by the exercises in chapter $5$ of \cite{ACGH} for the general genus $6$ curve $\pi \rvert_C$ maps $C$ birationally to a plane sextic with four nodes $r_1, \dots, r_4 \in \PP^2$. For each $i$ the fiber $\phi^{-1}(r_i)$ consists of two points, i.e. the line $L_i=\PP^1 \times \{r_i\} \subset \PP^1 \times \PP^2$ intersects $C$ in two points $s_{i,1}, s_{i,2}$. For each $i$ if $\Bl_{L_i} \PP^5$ is the blowup of $\PP^5$ along $L_i$ we get an inclusion of vector bundles $\sigma: N_{Q/\hat{Y_{r_i}}} \rvert_C(s_{i,1}+s_{i,2}) \to N_{Q/\PP^5} \rvert_C$. A Chern class computation shows that $\coker(\sigma) \cong \O_{C}(2-s_{i,1}-s_{i,2})$, i.e. there is a short exact sequence
\[\begin{tikzcd}
	0 & {N_{Q/Y_{r_i}} \rvert_C(s_{i,1}+s_{i,2})} & {N_{Q/\PP^5} \rvert_C} & {\O_C(2-s_{i,1}-s_{i,2})} & 0
	\arrow[from=1-1, to=1-2]
	\arrow[from=1-2, to=1-3]
	\arrow[from=1-3, to=1-4]
	\arrow[from=1-4, to=1-5]
\end{tikzcd}\]
Note that $N_{Q/\PP^5} \rvert_C$ has degree $34$ while $\O_C(2-s_{i,1}-s_{i,2})$ and $N_{Q/Y_{r_i}} \rvert_C(s_{i,1}+s_{i,2})$ have degrees $16$ and $18$ respectively. If $N_{Q/\PP^5} \rvert_C$ has a destabilizing line subbundle $\mathcal{M}$ then either the induced map $\mathcal{M} \to \O_C(2-s_{i,1}-s_{i,2})$ is nonzero or $\mathcal{M} \subset N_{Q/Y_{r_i}} \rvert_C(s_{i,1}+s_{i,2})$. In the latter case $\mu(\mathcal{M}) \leq 16 < \mu(N_{Q/\PP^5} \rvert_C)$ and in the second case $\mu(\mathcal{M}) \leq 18$. If $\mu(\mathcal{M})=18$ then for each $i$ the map $\mathcal{M} \to \O_C(2-s_{i,1}-s_{i,2})$ would be an isomorphism, in particular this implies 
$$\O_C(2-s_{i,1}-s_{i,2}) \cong \O_C(2-s_{j,1}-s_{j,2})$$
for each $i,j$. But if we had such an isomorphism for $i \neq j$ then $C$ would be hyperelliptic which contradicts our assumption that $C$ is general. It follows that $\mu(\mathcal{M}) \leq 17$ for every line subbundle $\mathcal{M} \subset N_{Q/\PP^5} \rvert_C$, i.e. $N_{Q/\PP^5} \rvert_C$ is semistable. Furthermore $\mu(N_{Q/\PP^{5}} \rvert_C) = 17$ is odd implies that $N_{Q/\PP^5} \rvert_C$ is semi-stable iff it is stable.

We have now shown that
$$0 \subset N_{C/S} \subset N_{C/Q} \subset N_{C/\PP^5}$$
gives a filtration of $N_{C/\PP^5}$ by semi-stable bundles. We have already seen that $\mu(N_{C/S})=20$ and that $\mu(N_{C/Q})=18$ so that from the exact sequence
\[\begin{tikzcd}
	0 & {N_{C/S}} & {N_{C/Q}} & {N_{S/Q} \rvert_C} & 0
	\arrow[from=1-1, to=1-2]
	\arrow[from=1-2, to=1-3]
	\arrow[from=1-3, to=1-4]
	\arrow[from=1-4, to=1-5]
\end{tikzcd}\]
we conclude that $\mu(N_{S/Q} \rvert_C)=16$. On the other hand $\mu(N_{Q/\PP^5} \rvert_C)=17$ so that the slopes of the semi-stable factors in our three step filtration do not satisfy the decreasing slope condition required by the Harder-Narasimhan filtration.

\subsection{Final Thoughts}
Since we have shown that $N_{Q/\PP^{5}} \rvert_C$ is stable for some rational curve $C$ it follows that $N_{Q/\PP^{5}}$ must also be stable. Therefore the above argument might generalize and provide a strategy for determining the semi-stability of $N_{Q/\PP^{n}}$ when $Q$ is a rational normal scroll $Q \subset \PP^n$. We have also left unanswered several questions regarding the HN-filtration of $N_{C/\PP^{g-1}}$ for trigonal and tetragonal curves. For example we have not said anything about the HN-filtration of $N_{C/\PP^{g-1}}$ for tetragonal curves of genus $g \geq 7$. One question in this vein is if the subbundle $N_{C/Q}$ (where $Q$ is the threefold scroll containing $C$) plays a role in the HN-filtration when $g \geq 7$. Finally, recall that our argument in the trigonal case reduced to curves $C$ with Maroni invariant $n=0$ or $n=1$. We can therefore ask for the HN-filtration of curves with a larger maroni invariant. For example if $g$ is even then from \cite{ZSF00} we know that the locus of trigonal curves with Maroni invariant $n \geq 1$ forms a divisor in $\overline{\mathfrak{T}}_g$. The problem is to determine the HN-filtration for the canonical models of curves in this divisor.

\bibliographystyle{amsalpha}
\bibliography{refs}
\nocite{*}

\end{document}